\documentclass[10pt]{amsart}
\let\Q\QQ
\newcommand{\Q}{\mathbb Q}
\newcommand{\Top}{\operatorname{Top}}

\setcounter{tocdepth}{1}
\input{arxiv2.sty}
\makeatletter

\makeatother

\title[Enumerating rank $n$ bundles on $\mathbb CP^{n+1}$]{Enumerating complex rank $n$ vector bundles on $\mathbb CP^{n+1}$}

\author{Morgan P. Opie}
\address[Morgan P. Opie]{Northwestern University
}
\email{
mpopie@northwestern.edu}

\date{}

\begin{document}

\maketitle
\begin{abstract}
We enumerate complex rank $n$ topological vector bundles on $\mathbb CP^{n+1}$ with prescribed Chern classes. This extends work of Atiyah and Rees in the case $n=2$ and work of Hu in the case that all Chern classes are zero.
\end{abstract}
\tableofcontents

\section{Introduction}
Given a finite-dimensional manifold $X$, a classical problem in topology is to enumerate complex topological vector bundles over $X$, up to isomorphism. This problem is stably approximated by the zeroeth complex topological $K$-theory of $X$, which is accessible via well-established computational tools. The unstable problem is far less tractable. 

The simplest invariants of complex bundles are Chern classes, which are stable invariants valued in integral cohomology. Given fixed Chern classes, the number of rank $r$ bundles on $X$ with $i$-th Chern class equal to $a_i$ is finite. So we may ask:
\begin{q}\label{q:enumeration} Given integers $r\geq 1$, $m \geq1$, and elements $a_i \in H^{2i}(X;\Z)$ for $1\leq i \leq r$, what is the number of isomorphism classes of complex rank $r$ topological bundles on $X$ with $i$-th Chern class equal to $a_i$?
\end{q}

In the case of complex projective spaces \Cref{q:enumeration} has been extensively studied. For $r=1$, there is a unique line bundle on $\CP^m$ with first Chern class equal to an arbitrary element in $H^2(\CP^{m};\Z) \cong \Z$. For $r \geq m$, rank $r$ bundles on $\CP^m$ are determined by their integer-valued Chern classes. There is a divisibility condition, called the Schwarzenberger condition, $S_r$, such that $a_1,\ldots, a_r\in \Z$ are the Chern classes of a unique rank $r$ bundle on $\CP^m$ if and only if the integers satisfy $S_r$ (see \cite{Switzer, Thomas}, and \Cref{app:Schwarzenberger}).

For $1<r < m,$ there is no uniform answer to \Cref{q:enumeration}. However, small cases are known in full. The question for $m=3$ is resolved in seminal work of Atiyah and Rees \cite{AR}. The question for $m= 4$ is also known \cite{AR,Opie-r3p5,Switzer2,Yang23}. The case $m=5$ is answered by combining results of Switzer for rank $2$ bundles \cite{Switzer2}, results 
for rank $3$ bundles  \cite{Opie-r3p5}, and the $n=4$ case of \Cref{main:enumeration}(ii) below.  

Infinite families of results are also known. In \cite{Hu}, Hu addresses \Cref{q:enumeration} in the case that all Chern classes are zero using {\em Weiss calculus}. For $n\geq 2$, Hu enumerates bundles of rank $n$ on $\CP^{n+1}$ and of rank $n$ on $\CP^{n+2}$ with vanishing Chern classes \cite[Theorem 1.1 and Theorem 1.2]{Hu}. Our result generalizes Hu's first case to nonzero Chern classes. 

To state our main result, we introduce the following notation:
\begin{defn} Given $\vec a=(a_1,\ldots,a_n) \in\Z^n$, let $\phi(n, \vec a)$ denote the number of isomorphism classes of rank $n$ bundles on $\CP^{n+1}$ with $i$-th Chern class equal to $a_i$.
\end{defn}
\begin{thm}\label{main:enumeration} Let $n \geq 1$ and let $\vec a=(a_1,\ldots ,a_n) \in \Z^n$. Then:
\begin{enumerate}
\item[(i)] $\phi(n, \vec a)\neq 0$ if and only if $a_1,\ldots, a_n,0$ satisfy the Schwarzenberger condition $S_{n+1}$.
\item[(ii)] When the Schwarzenberger condition is satisfied, 
\[\phi(n,\vec a)=\begin{cases} 1 & \text{ if $n$ or $a_1$ is odd; }\\
2 &\text{ if $n$ and $a_1$ are even}.
\end{cases}  \]
\item[(iii)] Given $n$ and $(a_1,\ldots, a_n)$ with $n$ and $a_1$ even, exactly one of the isomorphism classes of rank $n$ bundles on $\CP^{n+1}$ with $i$-th Chern class equal to $a_i$ extends to $\CP^{n+2}$.
\end{enumerate}
\end{thm}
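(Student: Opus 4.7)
My approach is to analyze the fibration
\[
S^{2n+1} \longrightarrow BU(n) \longrightarrow BU(n+1),
\]
which encodes the reduction from stable to unstable rank. For (i), rank-$(n{+}1)$ bundles on $\CP^{n+1}$ are classified by Chern classes satisfying $S_{n+1}$, so it suffices to show that the unique such bundle $\widetilde E$ with Chern classes $(a_1,\ldots,a_n,0)$ destabilizes. Lifting its classifying map $f\colon \CP^{n+1}\to BU(n+1)$ along $BU(n)\to BU(n+1)$ has primary obstruction in $H^{2n+2}(\CP^{n+1};\pi_{2n+1}(S^{2n+1}))\cong \Z$, equal to $c_{n+1}(\widetilde E)=0$; since $\CP^{n+1}$ has dimension $2n+2$ and only even-degree cohomology, there are no higher obstructions. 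The converse (necessity of Schwarzenberger) is immediate from stabilization, proving (i).

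\textbf{Counting in (ii).} Two lifts of $f$ differ by an element of $[\CP^{n+1},S^{2n+1}]$. Since the $(2n{+}1)$-skeleton $\CP^n$ of $\CP^{n+1}$ has dimension $2n$ and $S^{2n+1}$ is $2n$-connected, any such map factors through the cofiber $S^{2n+2}$, giving
\[
[\CP^{n+1},S^{2n+1}] \;\cong\; \pi_{2n+2}(S^{2n+1}) \;=\; \Z/2\cdot\eta
\]
for $n\ge 1$, so there are at most two destabilizations up to vertical homotopy. To pass to isomorphism classes, I would compute the action of $\mathrm{Aut}(\widetilde E)$ (in the homotopical sense) on this set by constructing a $\Z/2$-valued invariant $\alpha$ on destabilizations that generalizes the Atiyah--Rees $\alpha$-invariant in the case $n=2$. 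A plausible model for $\alpha$ is built from a secondary cohomology operation detecting $\eta$-multiplication on $c_1$ (for instance a Toda bracket of the shape $\langle 2,\eta,-\rangle$ applied to the mod-$2$ reduction of $c_1$); this would explain the dependence on the parity of $a_1$, while the parity of $n$ controls whether the relevant $\eta$-multiplication lands in the appropriate bidegree at all. The theorem then reduces to checking that $\alpha$ separates the two lifts precisely when $n$ and $a_1$ are both even.

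\textbf{Extension (iii) and main obstacle.} For (iii), extending a rank-$n$ bundle $g\colon \CP^{n+1}\to BU(n)$ to $\CP^{n+2}$ is obstructed by the composite $S^{2n+3}\xrightarrow{\phi}\CP^{n+1}\xrightarrow{g} BU(n)$ with the attaching map $\phi$ of the top cell, giving a class in $\pi_{2n+3}(BU(n))=\pi_{2n+2}(U(n))$. The two destabilizations of $\widetilde E$ differ by the $\eta$-element in the fiber direction, and, tracking this through the long exact sequence of $U(n)\to U(n+1)\to S^{2n+1}$, this $\eta$ translates into a nontrivial shift of extension obstructions in $\pi_{2n+2}(U(n))$. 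Provided the shift is nonzero in the parity regime where there are two distinct bundles, exactly one of them has vanishing extension obstruction, proving (iii). I expect the main obstacle to be the explicit construction of $\alpha$ in (ii) and the verification that it pairs correctly with the extension obstruction in (iii): detecting $\alpha$ requires enough secondary structure (Toda brackets, or equivalently secondary $K$-theoretic or mod-$2$ cohomological classes) to see the interaction of $\eta$ with $c_1\bmod 2$, and matching it to the unstable obstruction in $\pi_{2n+2}(U(n))$ will likely require either a careful Weiss-tower analysis along the lines of \cite{Hu} or a direct Postnikov-type computation for $BU(n)$ in the relevant range.
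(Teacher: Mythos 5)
Your part (i) is fine and is essentially the standard argument (the paper gets the same statement from its Remark 3.9 together with the Thomas--Switzer theorem), and your upper bound of two lifts in (ii) matches the paper's Proposition 2.1, though note that your sphere-fibration count gives ``at most two'' for all $n$, whereas the paper's Postnikov-tower argument already gives ``at most one'' for $n$ odd because $\pi_{2n+2}BU(n)=0$ there. The genuine gap is that the entire content of part (ii) --- deciding \emph{when} the two vertical-homotopy classes of lifts are actually isomorphic as bundles, and why this depends on the parity of $a_1$ --- is left as a conjectural sketch: you posit an invariant $\alpha$ built from an unspecified secondary operation (``a Toda bracket of the shape $\langle 2,\eta,-\rangle$ applied to $c_1 \bmod 2$'') and then say the theorem ``reduces to checking'' that $\alpha$ separates the lifts. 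That check is the theorem. The paper does not construct a separating invariant at all; instead it builds a two-stage Moore--Postnikov approximation $G \to F \to BU$ to $BU(n)\to BU$, where $F=\operatorname{fib}(c_{n+1})$ and $G$ is the fiber of a class $U\in \HF_2^{2n+3}(F)$ detected by $\Sq^2\iota_{2n+1}$, and computes the coaction of the fiber $K(\Z,2n+1)$ on $U$: $m^*U = U\otimes 1 + 1\otimes \Sq^2\iota_{2n+1}' + c_1\otimes \iota_{2n+1}'$ (Proposition 3.12). The cross term $c_1\otimes\iota_{2n+1}'$ is precisely what collapses the two lifts to a single homotopy class when $c_1$ is odd (Corollaries 3.13--3.15). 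Without an argument of this kind --- or an actual construction and computation of your $\alpha$ --- part (ii) is not proved.

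Part (iii) has the same character: your reduction of the extension problem to the obstruction $g\circ\phi\in\pi_{2n+3}BU(n)$ and the observation that the two destabilizations shift this obstruction by an $\eta$-related element is exactly the paper's strategy (its Diagram (4.4)), but you leave the decisive point as a hypothesis (``provided the shift is nonzero''). The paper proves this nonvanishing as Lemma 4.1: for $n$ even, $\sigma\circ\eta$ generates $\pi_{2n+3}BU(n)\cong\Z/2$, via Toda's isomorphism $[S^{2n+2},BU(n)]\cong\pi_{2n+2}^{\sphere}(\Sigma\CP^{n+2}_n)$ and an Adams spectral sequence argument showing $\eta^2$ on the bottom cell of $\Sigma\CP^{n+2}_n$ is nonzero; it also needs Lemma 4.2 identifying the composite of the Hopf map with the pinch map $\CP^{n+1}\to S^{2n+2}$ as the unstable $\eta$. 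You correctly anticipate that a Weiss-tower or Postnikov computation is needed here, but you do not supply it, so both of the load-bearing steps of the proof remain open in your proposal.
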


Part (iii) above gives a qualitative invariant of rank $n$ bundles on $\CP^{n+1}$: two non-isomorphic bundles with the same Chern classes are distinguished by whether or not they extend to $\CP^{n+2}$. However, we might hope for a computable, cohomological way to distinguish non-isomorphic bundles. This is a delicate problem: standard cohomological techniques often do not preserve enough information about the vector bundles with the same Chern classes. In some known cases, {\em twisted cohomological invariants} valued in generalized cohomology exist and (together with Chern classes) provide complete invariants of the topological isomorphism class of an unstable bundle \cite{AR,Opie-r3p5}. The author will address this secondary question in forthcoming work.

\subsection{Other related work}\label{history}

As a precursor to studying complex topological vector bundles on complex projective spaces, one might seek to understand vector bundles on spheres. Enumerating bundles on spheres amounts to understanding the unstable homotopy of unitary groups, which have been studied in a large range \cite{Toda59,MimuraToda_63,Matsunaga64,Oshima80}. In theory, this data plus information about the cell structure of complex projective spaces should enable analysis of fixed-rank bundles on $\CP^n$ for a large range of dimensions. However, this process is extremely difficult to carry out in a general way.

In \cite{CHO24}, 
work one prime $p$ at a time and use chromatic theories, {\em called higher real $K$-theories}, to obtain $p$-divisibility results for the number of rank $r$ bundles on $\CP^m$ with vanishing Chern classes for $m$ and $r$ in the {\em metastable range} and $m-r$ small relative to $p$. A key fact is that the enumeration of metastable, stably trivial rank $r$ bundles on $\CP^m$ reduces to calculating self-maps of stunted projective spectra \cite[Theorem 2.1]{Hu}. This problem exhibits periodic behavior and moreover there is a close relationship between the corank (dimension minus rank) and the chromatic height of the theories that can detect interesting bundles. When enumerating bundles with fixed nonzero Chern classes, the picture is not so clear. This work shows that, at least in small corank, the vanishing Chern class case may admit a direct extension.

\subsection{Outline}

In \Cref{Post}, we analyze the Postnikov tower for $BU(r)$ through dimension $2r+2$. The main input here is classical computations of homotopy groups of unitary groups. Using this analysis, we deduce that there are at most two rank $n$ bundles on $\CP^{n+2}$ with prescribed Chern classes. This approach also allows us to identify a geometrically defined action of $\Z/2$ on the set of rank $n$ bundles on $\CP^{n+1}$ with fixed Chern classes, which is used in \Cref{Extending}.
In \Cref{MoorePost}, we give an obstruction theory argument using the Moore--Postnikov tower for the map $BU(n) \to BU$. 
We analyze the obstruction-theoretic problem to prove part (ii) of \Cref{main:enumeration}.
In \Cref{Extending} we show that, in the case there are two non-isomorphic rank $n$ bundle on $\CP^{n+1}$ with the same Chern classes, exactly one extends to $\CP^{n+2}$. This completes the proof of \Cref{main:enumeration} (iii).
In \Cref{app:Schwarzenberger}, we discuss the Schwarzenberger condition and deduce (i) of \Cref{main:enumeration}.

\subsection{Notations and conventions}\label{sec:not}
Throughout this paper, we use the following notations and make the following conventions.
\begin{itemize}
\item Given spaces $X$ and $Y$, we write $[X,Y]$ for homotopy classes of maps from $X$ to $Y$. We write $Y^X$ for the space of maps from $X$ to $Y$, so that $[X,Y]=\pi_0(Y^X)$. 
\item Let $\Top$ denote the $\infty$-category of topological spaces.  While our main results can be stated in the homotopy category, or $1$-category, of topological spaces, the infinity category is needed for certain constructions to be well-defined. 
\item The $m$-truncation functor $\tau_{\leq m}\: \Top \to \Top_{\leq m}$ is reflection onto the full subcategory $\Top_{\leq m}$ of spaces with $X$ with $\pi_iX=0$ for $i>m$, i.e., the left adjoint to the inclusion $\Top_{\leq m} \hookrightarrow \Top.$
\item A space $Y$ is said to be $m$-skeletal if, for all $X \in \Top$, $[Y,X]\xrightarrow{\cong} [Y,\tau_{\leq m}X]$ via postcomposition with the $m$-truncation map $X \to \tau_{\leq m}X$. Examples of $m$-skeletal spaces include CW complexes with cells of dimension at most $m$ and manifolds of real dimension at most $m$.
\item Given a ring $R$ and a space or spectrum $X$, we write $H^*(X; R)$ for cohomology with coefficients in $R$. In the case that $R=\mathbb F_p$ for $p$ prime, we write $\HF_{p}^*(X)$ instead.
\item We write $\sphere$ for the sphere spectrum.
\item Given a space $X$, we write $\pi_iX$ for its unstable homotopy groups. For a spectrum $E$, we write $\pi_iE$ for its stable homotopy groups.
\item Given spectra $A$ and $B$, $\hmapsp{A}{B}$ denotes homotopy classes of maps of spectra.
\item Given a group, ring, space, or spectrum, $X$, we write $\c{X}{p}$ for its $p$-completion where $p$ is a prime.
\item  Given integers $r<m$, the space $\CP^m_r$ is defined to be the cofiber of the map of spectra $ \CP^{r-1} \to \CP^{m}$. We will frequently concerned with the suspension spectrum $\Sigma^\infty \CP^{m}_r$. For $k\in \mathbb Z$, we write $\Sigma^{\infty+k}\CP^{m}_r$ to indicate the $k$-th suspension of $\Sigma^\infty\CP^{m}_r$.
\end{itemize}

\subsection{Acknowledgements} I would like to thank a referee for \cite{Opie-r3p5} 
for asking about the enumeration of rank $4$ bundles on $\CP^5$, which led to this project. 
I am grateful to Mike Hill, Mike Hopkins, Yang Hu, and Alexander Smith for useful conversations.

This project was supported by an National Science Foundation Mathematical Sciences Postdoctoral Research Fellowship, Award No. 2202914.

\section{ The Postnikov tower for $BU(n)$ through dimension $2n+2$}\label{Post}
The unstable homotopy groups of unitary groups $\pi_iBU(n)$ are known in a large range \cite{Mimura_HBAT}. In particular, for $i\leq2n+2$, we have:

\[ \pi_{i}BU(n) = \begin{cases} 0 & \text{    $i\leq 2n$, $i$ odd},\\
\Z &\text{    $i\leq 2n$, $i$ even},\\
\Z/n! &\text{    $i=2n+1$},\\
\Z/2&\text{    $i=2n+2$ \& $n$ even},\\
0&\text{    $i=2n+2$ \&  $n$ odd}.\\
\end{cases}
\]
Thus, for $n$ even, the Postnikov tower for $BU(n)$ is a tower of principle fibrations:

\begin{equation}\label[diagram]{post1}\begin{tikzcd}[column sep=.5em, row sep= 1em]
\tau_{\leq 2n+2}BU(n)\ar[d]\\
\tau_{\leq 2n+1} BU(n)\ar[d]\ar[r]&K(\Z/2,2n+3) \\
\tau_{\leq 2n} BU(n)\ar[d]\ar[r]&K(\Z/r!,2n+2) \\
\tau_{\leq 2n-2} BU(n)\ar[d]\ar[r]&K(\Z,2n+1) \\
\vdots\ar[d] \\
\tau_{\leq 4}BU(n) \ar[d]\ar[r]& K(\Z,7) \\
K(\Z,2) \ar[r] & K(\Z,5)  &.
\end{tikzcd}\end{equation}
For $n$ odd, the Postnikov tower has one fewer stages:
\begin{equation}\label[diagram]{post2}\begin{tikzcd}[column sep=.5em, row sep= 1em]
\tau_{\leq 2n+2}BU(n)\ar[d]\\
\tau_{\leq 2n} BU(n)\ar[d]\ar[r]&K(\Z/r!,2n+2) \\
\vdots\ar[d] \\
K(\Z,2) \ar[r] & K(\Z,5) &.
\end{tikzcd}\end{equation}
We refer the reader to \cite[Chapter IV]{GJ} or \cite[Chapter 3]{MP} for details on Postnikov towers and Moore--Postnikov towers (the relative version of the former).
\begin{prop}\label{prop:bound} Given $(a_1,\ldots, a_n)\in \mathbb Z$, there are at most two rank $n$ bunndles on $\CP^{n+1}$ with $i$-th Chern class equal to $a_i$ if $n$ is even, and at most one if $n$ is odd.
\end{prop}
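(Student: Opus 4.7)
The plan is to bound $|[\CP^{n+1}, BU(n)]|$ with prescribed Chern classes by ascending the Postnikov tower of $BU(n)$ shown in \Cref{post1,post2}. Since $X := \CP^{n+1}$ is $(2n+2)$-skeletal, we have $[X, BU(n)] = [X, \tau_{\leq 2n+2} BU(n)]$, so we need only analyze the tower through this stage. The standard obstruction-theoretic input is that at each principal fibration stage $E \to B$ with fiber $K(A,m)$, the fiber of $[X,E] \to [X,B]$ over any class in $[X,B]$ is either empty or of cardinality at most $|H^m(X;A)|$; this follows from the long exact sequence obtained by mapping $X$ into the fiber sequence $K(A,m) \to E \to B \to K(A,m+1)$.

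For the stable stages $\tau_{\leq 2k} BU(n) \to \tau_{\leq 2k-2} BU(n)$ with $1 \leq k \leq n$ and fiber $K(\Z,2k)$, I would argue that fixing $c_k$ pins down at most one element per fiber. This is because $c_k$ restricted to the fiber inclusion $K(\Z,2k) \hookrightarrow \tau_{\leq 2k} BU(n)$ is multiplication by a nonzero integer: the fiber inclusion is an isomorphism on $\pi_{2k}$ by the long exact sequence of the Postnikov fibration (using $\pi_{2k} \tau_{\leq 2k-2} BU(n) = 0$), and $c_k$ detects the Bott generator of $\pi_{2k} BU(n)$ as $\pm (k-1)!$. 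Thus, inductively, the subset of $[X, \tau_{\leq 2n} BU(n)]$ with Chern classes $(a_1, \ldots, a_n)$ has at most one element.

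For the remaining top stages, the fiber $K(\Z/n!, 2n+1)$ contributes at most $|H^{2n+1}(X;\Z/n!)| = 1$ element, while for $n$ even the additional fiber $K(\Z/2, 2n+2)$ contributes at most $|H^{2n+2}(X;\Z/2)| = 2$ elements; these use the vanishing of odd cohomology of $\CP^{n+1}$ and the top cell in dimension $2n+2$. Multiplying yields at most $2$ bundles for $n$ even and at most $1$ for $n$ odd, as claimed. The most delicate step is the Chern class injectivity in the stable range, which requires carefully identifying the composition $K(\Z,2k) \hookrightarrow \tau_{\leq 2k} BU(n) \xrightarrow{c_k} K(\Z,2k)$ as a nonzero multiple of the identity.
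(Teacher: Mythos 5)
Your proposal is correct and follows essentially the same route as the paper: truncate using the $(2n+2)$-skeletal property of $\CP^{n+1}$, climb the Postnikov tower, use the vanishing of odd cohomology of $\CP^{n+1}$, and extract the factor of at most $|H^{2n+2}(\CP^{n+1};\Z/2)|=2$ at the top stage when $n$ is even. The only difference is that you justify more explicitly why fixing $c_k$ selects at most one lift at each stable stage (via the composite $K(\Z,2k)\to\tau_{\leq 2k}BU(n)\xrightarrow{c_k}K(\Z,2k)$ being multiplication by $\pm(k-1)!$), a point the paper states without elaboration.
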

\begin{proof}
Since $\CP^{n+1}$ is $(2n+2)$-skeletal, $[\CP^{n+1},BU(n)]\simeq [\CP^{n+1}, \tau_{\leq 2n+2}BU(n)].$ We can build a map from $\CP^{n+1}$ to $\tau_{\leq 2n+2}BU(n)$ in stages using \Cref{post1} or \Cref{post2}. Let $i=2j$ for $j<n$. The obstruction to solving the lifting problem
\[\begin{tikzcd}
& &\tau_{\leq i+2}BU(n)\ar[d]\\
& \CP^{n+1} \ar[ur,dashed]\ar[r]&\tau_{\leq i}BU(n) \end{tikzcd}\]
is a class in $H^{i+1}(\CP^{n+1};\Z)=0$, since $i=2j$ is even. The choices of lift correspond to choices of $j$-th Chern class.

Let $\epsilon=1$ if $n$ is even an $\epsilon=2$ if $n$ is odd. The obstruction to solving the lifting problem
\[\begin{tikzcd}
& &\tau_{\leq 2n+\epsilon}BU(n)\ar[d]\\
& \CP^{n+1} \ar[ur,dashed]\ar[r]&\tau_{\leq 2n}BU(n) 
\end{tikzcd}\]
lies in $H^{2n+2}(\CP^{n+1};\Z/n!) \simeq \Z/n!$ and gives a condition on the Chern classes. The choices of lift are acted on transitively by a quotient of $H^{2n+1}(\CP^{n+1};\Z/n!)=0$. Referring to \Cref{post2}, this completes the argument for $n$ odd.
 
For $n$ even, we return to \Cref{post1}. The obstruction to lifting to $\tau_{\leq 2n+2} BU(n)$
lies in $H^{2n+3}(\CP^{n+1};\Z/2)=0$. The choices of lift are acted on transitively by a quotient of $H^{2n+2}(\CP^{n+1};\Z/2)\simeq \Z/2$. So, given $n$ even and integers $(a_1,\ldots, a_n) \in \mathbb Z^n$, there are at most $2$ rank $n$ bundles on $\CP^{n+1}$ with $i$-th Chern class equal to $a_i$.
\end{proof}

The above argument bounds $\phi(n,\vec a)$ using only the homotopy groups of unitary groups and cohomology of $\CP^n.$ To determine the number of isomorphism classes, we use a different method. Nevertheless, the proof of \Cref{prop:bound} shows how non-isomorphic bundles with the same Chern classes are related. 
\begin{prop}\label{prop:action} 
 If two rank $n$ bundles on $\CP^{n+1}$ have the same Chern classes, then one can be obtained from the other by an action of $\pi_{2n+2}BU(n)\simeq \Z/2$ on $[\CP^{n+1},BU(n)]$. This action is given by taking a pair 
$$\left(\sigma\: S^{2n+2} \to BU(n), \, V \: \CP^{n+1} \to BU(n)\right)$$ to the composite

\begin{equation}\label[diagram]{diag:squeeze}\CP^{n+1} \xrightarrow{s} \CP^{n+1} \vee S^{2n+2}  \xrightarrow{  V\vee \sigma} BU(n),\end{equation} where the first map $s$ is obtained by contracting the boundary of a $2n+2$ disc inside the top-dimensional cell of $\CP^{n+1}$.
\end{prop}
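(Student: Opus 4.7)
The plan is to verify that the squeeze construction gives a well-defined action of $\pi_{2n+2}BU(n)$ on $[\CP^{n+1}, BU(n)]$, to check that this action preserves Chern classes, and then to identify it with the principal action from obstruction theory, which implies transitivity on the set of bundles with fixed Chern classes. For $n$ odd, $\pi_{2n+2}BU(n) = 0$ and the statement is vacuous, so the essential case is $n$ even.

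First I would verify well-definedness. The squeeze map $s$ is independent up to homotopy of the choice of small disc in the top cell of $\CP^{n+1}$, so the assignment $(V, \sigma) \mapsto V * \sigma := (V \vee \sigma) \circ s$ descends to homotopy classes. That this gives a group action of $\pi_{2n+2}BU(n)$ follows from coassociativity of the top-cell coaction, standard for a CW complex with a single top cell.

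Second, I would check preservation of Chern classes. Representing $c_j$ as a map $c_j: BU(n) \to K(\Z, 2j)$, we have $c_j(V * \sigma) = c_j \circ (V \vee \sigma) \circ s$. Since $H^{2j}(S^{2n+2}; \Z) = 0$ for $j \leq n$, the composite $c_j \circ \sigma$ is nullhomotopic, so $c_j \circ (V \vee \sigma)$ factors through projection onto the $\CP^{n+1}$ wedge summand, and the full composition recovers $c_j(V)$.

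The main step is to identify the squeeze action with the principal action of $H^{2n+2}(\CP^{n+1}; \Z/2) \cong \Z/2$ on the set of lifts of a fixed map to $\tau_{\leq 2n+1}BU(n)$ along $\tau_{\leq 2n+2}BU(n) \to \tau_{\leq 2n+1}BU(n)$. Because $\CP^{n+1}$ has cells only in even dimensions, its $(2n+1)$-skeleton is $\CP^n$, and the top-cell collapse $q: \CP^{n+1} \to S^{2n+2}$ equals the composite of $s$ with the wedge projection $\CP^{n+1} \vee S^{2n+2} \to S^{2n+2}$. Using this compatibility together with naturality of the principal fibration action, one shows that $V$ and $V * \sigma$ agree in $\tau_{\leq 2n+1}BU(n)$, while in $\tau_{\leq 2n+2}BU(n)$ they differ by the class in $H^{2n+2}(\CP^{n+1}; \Z/2)$ obtained by pulling back, along $q$, the image of $\sigma$ under the fiber-inclusion isomorphism $\pi_{2n+2}BU(n) \cong H^{2n+2}(S^{2n+2}; \Z/2)$. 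Since $q^*: H^{2n+2}(S^{2n+2}; \Z/2) \to H^{2n+2}(\CP^{n+1}; \Z/2)$ is an isomorphism, the generator of $\pi_{2n+2}BU(n)$ acts non-trivially. Thus two bundles with the same Chern classes have the same map to $\tau_{\leq 2n+1}BU(n)$ by \Cref{prop:bound}, differ by some element of $H^{2n+2}(\CP^{n+1}; \Z/2)$, and hence are related by the squeeze action.

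I expect the main obstacle to be this third step: carefully checking that the geometric squeeze action agrees with the obstruction-theoretic principal action. The key enabling observation is the compatibility of the pinch map $s$ with the top-cell collapse $q$, which comes from the cell structure of $\CP^{n+1}$.
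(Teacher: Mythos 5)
Your argument is correct, but it runs in the opposite direction from the paper's. The paper's proof ``transposes'' the Postnikov-tower analysis across the skeleton--truncation adjunction: since $[\CP^n,BU(n)]$ is detected by Chern classes, two bundles with the same Chern classes restrict to a common class on $\CP^n$, and for a space obtained from $\CP^n$ by attaching a single $(2n+2)$-cell the set of extensions of a fixed map $\CP^n\to BU(n)$ over the top cell is, when nonempty, acted on transitively by $\pi_{2n+2}BU(n)$ via exactly the pinch map of \cref{diag:squeeze}. On the cellular side there is nothing to identify: the pinch action \emph{is} the action classifying extensions. You instead stay on the Postnikov side, so your third step --- matching the geometric pinch action with the principal action of $H^{2n+2}(\CP^{n+1};\Z/2)$ on lifts through $\tau_{\leq 2n+2}BU(n)\to\tau_{\leq 2n+1}BU(n)$ --- carries the real weight. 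That identification is true, and your sketch (compatibility of $s$ with the top-cell collapse, plus naturality of the principal action, i.e.\ the usual difference-cochain formalism) can be completed, but it is precisely the bookkeeping the paper's transposition avoids; each approach buys what the other must prove. One caution: your claim that ``the generator of $\pi_{2n+2}BU(n)$ acts non-trivially'' is fine if you mean only that the acting class $q^*\tilde{\sigma}$ generates $H^{2n+2}(\CP^{n+1};\Z/2)$, but it is false if read as freeness of the action on $[\CP^{n+1},BU(n)]$: homotopy classes of lifts form a quotient of the torsor of vertical homotopy classes, and by \cref{main:thm1} the generator acts trivially on isomorphism classes whenever $c_1$ is odd. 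Only transitivity is needed for the proposition, so this does not damage your proof, but the stronger assertion should be dropped.
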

\begin{proof} This follows from transposing the Postnikov tower argument above across the skeleton-truncation adjunction on the homotopy category of spaces. Given a map $\CP^n \to BU(n)$, the obstruction to extending to a map $\CP^{n+1}\to BU(n)$ is a class in $\pi_{2n+1}BU(n)$. Choices of lift correspond to null-homotopies of this class, which are acted on transitively by $\pi_{2n+2}BU(n)$ via the action defined above.
\end{proof}

For more discussion on the action in \Cref{prop:action}, see \cite[Construction 1.6]{Opie-r3p5}. 

\section{The Moore--Postnikov tower for $BU(n) \to BU$}\label{MoorePost}

Fix an integer $n\geq 2$ Consider a map $c_{n+1}\: BU \to K(\Z,2n+2)$ representing the $(n+1)$-st Chern class. Let

\begin{equation}\label{eq:UF}F:= \op{fib}(c_{n+1}).\end{equation}

Consider the natural map $BU(n) \to BU$. The composite $BU(n) \to BU \xrightarrow{c_{n+1}} K(\Z,2n+2)$ is null, since the $(n+1)$-st Chern class of the universal bundle on $BU(n)$ is zero. Since $H^{2n+3}(BU(n);\Z)=0$, up to homotopy there is a unique lift  $f\: BU(n) \to F$ making the diagram below homotopy commutative
\begin{equation}\label[diagram]{diag:f}
\begin{tikzcd}
& F\ar[d] \\
BU(n) \ar[r]\ar[ur, dashed,"\exists ! f"]& BU \ar[r,"c_{n+1}"] & K(\Z,2n+2).
\end{tikzcd} 
\end{equation}
\begin{lemma}\label{lem:f_approx} The map $f\: BU(n) \to F$ is an equivalence after $(2n+1)$-truncation. 
\end{lemma}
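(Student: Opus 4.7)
The plan is to show that $\op{fib}(f)$ is $(2n+1)$-connected, which directly implies $\tau_{\leq 2n+1}f$ is an equivalence. Since the composite $BU(n) \xrightarrow{f} F \to BU$ is the standard inclusion, $f$ fits into a commutative ladder of fiber sequences
\begin{equation*}
\begin{tikzcd}
U/U(n) \ar[r] \ar[d, "\tilde f"] & BU(n) \ar[r] \ar[d, "f"] & BU \ar[d, equal] \\
K(\Z,2n+1) \ar[r] & F \ar[r] & BU,
\end{tikzcd}
\end{equation*}
in which $K(\Z,2n+1) = \Omega K(\Z,2n+2)$. The fibers of $\tilde f$ and $f$ agree, so it suffices to show $\tilde f$ induces isomorphisms on $\pi_i$ for $i \leq 2n+1$.

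Next I would compute the relevant homotopy groups via the long exact sequences of the two fibrations. Unitary stability (equivalently, the stated homotopy groups of $BU(n)$ compared to $\pi_* BU$) gives that $U/U(n)$ is $2n$-connected with $\pi_{2n+1}U/U(n) = \Z$; moreover, since $\pi_{2n+1}BU(n) = \Z/n!$, the boundary $\pi_{2n+2}BU = \Z \to \pi_{2n+1}U/U(n) = \Z$ must be multiplication by $\pm n!$. For the bottom fibration, $K(\Z, 2n+1)$ is $2n$-connected with $\pi_{2n+1} = \Z$, and the boundary $\pi_{2n+2}BU \to \pi_{2n+1}K(\Z, 2n+1)$ is the map on $\pi_{2n+2}$ induced by $c_{n+1}$. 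This is again $\pm n!$, by the Chern character computation $\op{ch}_{n+1} = (-1)^{n+1}c_{n+1}/n!$ on classes with vanishing lower Chern classes, applied to a generator of $\pi_{2n+2}BU = \Z$.

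Naturality of the boundary then yields a commutative square
\begin{equation*}
\begin{tikzcd}[column sep=large]
\pi_{2n+2}BU \ar[r,"\pm n!"] \ar[d, equal] & \pi_{2n+1}U/U(n) \ar[d, "\tilde f_*"] \\
\pi_{2n+2}BU \ar[r,"\pm n!"] & \pi_{2n+1}K(\Z, 2n+1),
\end{tikzcd}
\end{equation*}
in which every entry is $\Z$ and both horizontal maps are multiplication by $\pm n!$; commutativity forces $\tilde f_* = \pm 1$, an isomorphism. Combined with $\pi_i U/U(n) = 0 = \pi_i K(\Z,2n+1)$ for $i \leq 2n$, this shows $\tilde f$ is a $(2n+1)$-equivalence, completing the argument.

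The only delicate point is the identification of $c_{n+1}$ on a generator of $\pi_{2n+2}BU$ as $\pm n!$; this is a classical computation via the Chern character and Newton's identities, but must be cited or checked.
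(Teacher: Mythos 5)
Your argument is correct, but it takes a genuinely different route from the paper. You work directly on homotopy groups: you compare the two fibrations over $BU$, identify both fibers' first nontrivial homotopy group as $\Z$ in degree $2n+1$, and pin down both boundary maps $\pi_{2n+2}BU \to \Z$ as multiplication by $\pm n!$ --- once from $\pi_{2n+1}BU(n)\cong \Z/n!$ via exactness, and once from Bott integrality (the Chern character computation showing $c_{n+1}$ takes a generator of $\pi_{2n+2}BU$ to $\pm n!$ times a generator), so that naturality forces $\tilde f_*$ to be $\pm 1$. The paper instead argues cohomologically: it checks that $f$ is an isomorphism on $H^*(-;\Q)$ and $H^*(-;\F_p)$ for all $p$ through degree $2n+2$, using the rational splitting of $BU$ and $BU(n)$ and the mod $p$ Serre spectral sequence for $K(\Z,2n+1)\to F\to BU$ (where $\iota_{2n+1}$ transgresses to $c_{n+1}$), and then invokes the cohomological Whitehead-type Lemma~\ref{lem:basic}. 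Your approach is shorter and more elementary, resting on two standard inputs ($\pi_{2n+1}BU(n)\cong\Z/n!$, which the paper already quotes in Section~\ref{Post}, and Bott integrality, which as you note should be cited); it is essentially the observation that these two facts are equivalent via the comparison square. The paper's spectral-sequence route earns its keep because the same $E_2$-page analysis is reused immediately afterwards to locate the class $\Sq^2\iota_{2n+1}$, define $U$, and compute the coaction $m^*U$ in Proposition~\ref{prop:coaction}, none of which is visible from the homotopy-group argument. The only points to tighten in your write-up: state explicitly that the five lemma (or the fiber identification plus the vanishing of $\pi_{2n+2}K(\Z,2n+1)$) converts the statement about $\tilde f$ into the statement about $\tau_{\leq 2n+1}f$, and supply a reference for the Bott integrality input.
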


We first study the cohomology of $F$. Consider the Serre spectral sequence for
\begin{equation}\label[diagram]{fibF}K(\Z,2n+1) \to F \to BU.\end{equation}  
\begin{defn}\label{graded_small}Given a graded module $M$ over the Steenrod algebra $\mathcal A$, and a non-negative integer $r$, let $M^{\leq r}$ denote the module over $\mathcal A$ obtained by taking the quotient of $M$ by the graded submodule consisting of terms of degree greater than or equal to $r+1$.\end{defn}
The main result we need is the following:
\begin{prop}[{\cite[Chapter 9, Theorem 3]{MT}}]\label{K9_cohomology} Let $p$ be a prime and let $\iota_{j}$ generate $\HF_p^{j}(K(\Z,j))$. Then
\[\HF_2^{*\leq 2n+3}(K(\Z,2n+1))\cong \mathbb F_2\{\iota_{2n+1},\Sq^2\iota_{2n+1}\},\] while for $p$ odd:
\[\HF_p^{*\leq 2n+3}(K(\Z,2n+1))\cong \mathbb F_p\{\iota_{2n+1}\}. \]
\end{prop}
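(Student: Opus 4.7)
The plan is to invoke Serre's classical computation of the mod $p$ cohomology of Eilenberg--MacLane spaces and enumerate admissible Steenrod monomials applied to the fundamental class $\iota_{2n+1}$ in the relevant degree range.

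For $p=2$, Serre's theorem presents $\HF_2^*(K(\Z,m))$ as a polynomial algebra on classes $\Sq^I \iota_m$, where $I=(i_1,\ldots,i_k)$ is admissible (so $i_j\geq 2 i_{j+1}$), of excess $e(I)=i_1-i_2-\cdots-i_k<m$, and with last entry $i_k\neq 1$. The final constraint encodes that $\iota_m$ is reduced from an integral class, whence $\Sq^1\iota_m=0$. Setting $m=2n+1$ and restricting to total degree at most $2n+3$, the admissible index sequences that contribute are $I=()$ and $I=(2)$, giving $\iota_{2n+1}$ in degree $2n+1$ and $\Sq^2\iota_{2n+1}$ in degree $2n+3$. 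The only potential class in degree $2n+2$ is $\Sq^1\iota_{2n+1}$, which is excluded by integrality. Products are ruled out because $(\iota_{2n+1})^2$ sits in degree $4n+2>2n+3$ for $n\geq 1$.

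For odd $p$, Serre's analogous theorem expresses $\HF_p^*(K(\Z,m))$ as a free graded-commutative algebra on admissible monomials in the Bockstein $\beta$ and the reduced power operations $\mathcal P^i$ applied to $\iota_m$, subject to an analogous excess bound and the integrality constraint that the rightmost operation is not $\beta$. With $m=2n+1$, the fundamental class $\iota_{2n+1}$ contributes in degree $2n+1$; the class $\beta\iota_{2n+1}$ in degree $2n+2$ is excluded by integrality; and $\mathcal P^1\iota_{2n+1}$ has degree $2n+1+2(p-1)\geq 2n+5$, which lies out of range. Products vanish because $\iota_{2n+1}$ has odd degree and is therefore an exterior generator.

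The main point requiring care is correctly applying the integrality constraint; without it one would spuriously add $\Sq^1\iota_{2n+1}$ or $\beta\iota_{2n+1}$ in degree $2n+2$, yielding an incorrect answer. Once this is in place, the proof amounts to a routine finite enumeration of admissible monomials, and for our purposes it suffices to quote \cite[Chapter 9, Theorem 3]{MT}, which records the resulting additive basis directly.
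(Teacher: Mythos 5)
Your proposal is correct and matches the paper, which gives no argument beyond citing \cite[Chapter 9, Theorem 3]{MT}; your enumeration of admissible monomials (with the excess bound and the integrality exclusion of $\Sq^1\iota_{2n+1}$, resp.\ $\beta\iota_{2n+1}$) is exactly the standard unpacking of that reference in the range $*\leq 2n+3$. No further justification is needed.
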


We next note an elementary result, which can be proved with the Hurewicz theorem, the universal coefficient theorem, and the $E_2$-page of a Serre spectral sequence.
\begin{lemma}\label{lem:basic} Let $X$ and $Y$ be connected spaces with finitely generated integral cohomology. If $g\:X \to Y$ induces an isomorphism on cohomology in degrees less than or equal to $i$ with coefficients in $\mathbb Q$ and with coefficients $\mathbb F_p$ for all primes $p$, then $g$ induces an isomorphism on homotopy in degrees less than or equal to $i-1$, and a surjection in degree $i$.
\end{lemma}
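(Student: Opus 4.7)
The plan is to pass to the homotopy fiber $F_g$ of $g$ and show it is $(i-1)$-connected; the long exact sequence of $F_g \to X \to Y$ then yields the conclusion. (In the paper's applications both $X$ and $Y$ are simply connected, which I treat as an implicit hypothesis, needed for the Hurewicz theorem and for trivial local coefficients in the spectral sequence.)

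First I would use the universal coefficient theorem to reformulate the hypothesis: for $R$ a field, $H^j(-;R) \cong \op{Hom}(H_j(-;R), R)$, so $g_\ast\colon H_j(X;R) \to H_j(Y;R)$ is an isomorphism for $j \leq i$ and $R = \Q$ or $\Fp$. I would then run an induction on $k$, for $0 \leq k \leq i-1$, in the homology Serre spectral sequence $E^2_{p,q} = H_p(Y; H_q(F_g;R)) \Rightarrow H_{p+q}(X;R)$, to show $H_k(F_g;R) = 0$. The inductive hypothesis that $F_g$ is $(k-1)$-connected makes rows $1 \leq q \leq k-1$ vanish; on the diagonals $p+q \in \{k, k+1\}$ the only differential that can be nonzero is the transgression
\[ d_{k+1}\colon E^{k+1}_{k+1,0} = H_{k+1}(Y;R) \longrightarrow E^{k+1}_{0,k} = H_k(F_g;R). \]
Identifying edge maps with $g_\ast$: iso in degree $k$ forces $E^\infty_{0,k} = \op{coker}(d_{k+1}) = 0$, so $d_{k+1}$ is surjective; iso in degree $k+1$ (available because $k+1 \leq i$) forces $E^\infty_{k+1,0} = \ker(d_{k+1}) = H_{k+1}(Y;R)$, so $d_{k+1} = 0$. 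Together $H_k(F_g;R) = 0$, closing the induction.

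Finally I would promote these field-coefficient vanishings to integral vanishing via the universal coefficient theorem. Finite generation of $H_\ast(X;\Z)$ and $H_\ast(Y;\Z)$ (from finite generation of integral cohomology) passes to $H_\ast(F_g;\Z)$ via the spectral sequence, and a finitely generated abelian group with trivial $\otimes \Q$ and $\otimes \Fp$ for every prime $p$ vanishes. Hence $H_k(F_g;\Z) = 0$ for $k \leq i-1$; the Hurewicz theorem then gives that $F_g$ is $(i-1)$-connected, and the long exact sequence of the fibration produces the claimed isomorphism on $\pi_j$ for $j \leq i-1$ and surjection on $\pi_i$.

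The main obstacle is the inductive spectral sequence step: the argument requires iso in two consecutive degrees at once — iso in degree $k$ to make the transgression surjective, and iso in degree $k+1$ to make it zero — and only the pair together collapses the fiber column through degree $i-1$.
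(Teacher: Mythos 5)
Your proof is correct and uses exactly the toolkit the paper itself cites for this lemma (the Hurewicz theorem, the universal coefficient theorem, and the Serre spectral sequence of the homotopy fiber); the paper supplies no further detail, so your argument is essentially the intended one, including the key point that isomorphisms in two consecutive degrees are needed to kill the transgression. Your flag that simple connectivity must be added to the stated hypotheses is also right: the statement is false for merely connected spaces, and all of the paper's applications are to simply connected ones.
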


\begin{proof}[Proof of \Cref{lem:f_approx}]
For the rational statement, recall that $$BU(n) \simeq_{\mathbb Q} \prod_{i=1}^n K(\mathbb Q,2i) \, \, \text{ and } \,\, BU\simeq_\Q \prod_{i=1}^\infty K(\Q,2i),$$ where $\simeq_{\mathbb Q}$ denotes an equivalence after rationalization. The rational equivalences are induced by the rational Chern class maps. Therefore

$$F \simeq \prod_{\substack{i=1\\ i \neq n+1}}^\infty K(\Q,2i),$$ and the $2n+3$-truncation of the Chern class map induces a rational equivalence between $BU(n)$ and $F$, which is stronger than we need.

For any prime $p$, consider $\HF_p$-Serre spectral sequence associated to the fiber sequence \Cref{fibF}, which has the form

$$E_2^{t,q}=\HF_p^t\left(BU, \HF_p^q(K(\Z,2n+1)\right)\implies \HF_p^{t+q}(F).$$
All classes in the cohomology of the fiber that contribute to the $E_\infty$-page in degree at most $2n+3$ are given in \Cref{K9_cohomology}. 
In particular, $\iota_{2n+1}$ is the first class in the cohomology of the fiber. 
By construction, $d_{2n+2}(\iota_{2n+1})=c_{n+1}.$ 
Since the natural map from $BU(n)\to BU$ induces an isomorphism on cohomology modulo the ideal generated by $c_i$'s for $i \geq n+1$, we see that $f$ induces an equivalence on cohomology through degree $2n+2$. We are done by \Cref{lem:basic}.
\end{proof}
The argument given above actually shows that, for $p$ and odd prime,
\[\c{\tau_{\leq2n+3}(F)}{p} \simeq \c{\tau_{\leq2n+3}BU(n)}{p}.\] 
For $p=2$ the situation is more complicated.
We depict the $E_2$-page of the $p=2$ Serre spectral sequence for $F$ sequence in \Cref{fig1}. The differentials on the $r$ page are $d_r^{t,q}\:E_r^{t,q} \to E_r^{t+r,q-r+1}.$ 

Consider the class $\Sq^2\iota_{2n+1}$ on the $E_2$-page of the spectral sequence. This class is transgressive by Kudo's transgression theorem and $d_{2n+4}(\Sq^2\iota_{2n+1})$ is the class of $\Sq^2c_{n+1}$. If $n$ is odd, $\Sq^2(c_{n+1})=c_1c_{n+1}+c_{n+2}$ and the class $\Sq^2\iota_{2n+1}$ does not survive the spectral sequence.
If $n$ is even, $\Sq^2(c_{n+1})=c_1c_{n+1}$ and the class $\Sq^2\iota_{2n+1}$ is a permanent cycle. So, in the case of $n$ even, there is a nonzero class in $\HF_2^{2n+3}(F)$ detected by $\Sq^2(\iota_{2n+1})$.

\begin{defn}\label{def:UG}For $n$ even, let $U\: F \to K(\Z/2,2n+3)$ be the cohomology class detected by $\Sq^2\iota_{2n+1}$ on the $E_2$-page of the $\HF_2$-Serre spectral sequence for the fibration of \Cref{fibF}.
Let $G$ denote the homotopy fiber of $U$.
\end{defn}
\noindent Given a map $BU(n) \to F,$ let $g\: BU(n) \to G$ be a lift fitting into a diagram

\begin{equation}\label[diagram]{diag:g}
\begin{tikzcd}
& G\ar[d] \\
BU(n) \ar[r,"f"]\ar[ur, dashed," g"]& F\ar[d] \ar[r,"U"] & K(\Z/2,2n+3)\\
& BU\ar[r,"c_{n+1}"] &K(\Z,2n+2).
\end{tikzcd} 
\end{equation}

\begin{lemma}\label{lemG} Let $n$ be even. The map $\tau_{\leq 2n+2}g\: \tau_{\leq 2n+2} BU(n) \to \tau_{\leq 2n+2n}G$ is an equivalence.
\end{lemma}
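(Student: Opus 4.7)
The plan is to apply \Cref{lem:basic} with $i=2n+3$, reducing the lemma to showing that $g^*$ is an isomorphism on cohomology in degrees $\leq 2n+3$ with coefficients in $\Q$ and in $\F_p$ for every prime $p$. For rational and odd-prime coefficients, the fiber $K(\Z/2,2n+2)$ of $U$ is acyclic, so $G \to F$ is an equivalence on these cohomologies; combined with \Cref{lem:f_approx} and the stronger odd-prime statement recorded immediately after its proof, this handles every prime except $2$.

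The main work is at $p=2$. I would run the Serre spectral sequence for the fibration $K(\Z/2, 2n+2) \to G \to F$. In total degree $\leq 2n+3$, the only fiber classes are $\iota_{2n+2}$ (in degree $2n+2$) and $\Sq^1 \iota_{2n+2}$ (in degree $2n+3$). By \Cref{def:UG} the transgression $d_{2n+3}$ sends $\iota_{2n+2}$ to $U$, killing both $\iota_{2n+2}$ and the class $U \in H^{2n+3}(F;\F_2)$. The task reduces to showing that $\Sq^1 \iota_{2n+2}$ also dies.

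By Kudo's transgression theorem and the Adem relation $\Sq^1\Sq^2 = \Sq^3$, the transgression of $\Sq^1 \iota_{2n+2}$ is $\Sq^1 U \in H^{2n+4}(F;\F_2)$; under the fiber inclusion $K(\Z, 2n+1) \hookrightarrow F$ in \Cref{fibF}, this class pulls back to $\Sq^3 \iota_{2n+1}$. To confirm $\Sq^1 U \neq 0$, I would revisit that earlier spectral sequence: by Kudo again, $\Sq^3 \iota_{2n+1}$ is transgressive with target $\Sq^3 c_{n+1} = \Sq^1(\Sq^2 c_{n+1}) = \Sq^1(c_1 c_{n+1}) = 0$, where I use that each $c_i$ is the mod $2$ reduction of an integral class (so $\Sq^1 c_i = 0$) and the Cartan formula. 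Hence $\Sq^3 \iota_{2n+1}$ is a permanent cycle, representing a nonzero element of $H^{2n+4}(F;\F_2)$, so $\Sq^1 U \neq 0$ and $\Sq^1 \iota_{2n+2}$ dies by transgression. This yields a dimensional equality $H^{\leq 2n+3}(G;\F_2) = H^{\leq 2n+3}(BU(n);\F_2)$, which is upgraded to an honest isomorphism induced by $g^*$ via naturality of the spectral sequence applied to the composite $BU(n) \xrightarrow{g} G \to F \to BU$, which agrees with the canonical inclusion.

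The main obstacle is this mod-$2$ analysis: the lemma reduces to whether $\Sq^1 \iota_{2n+2}$ dies in the $G$-spectral sequence, and the answer is only accessible by simultaneously working in the earlier $F$-spectral sequence to verify $\Sq^3 c_{n+1}=0$. All the delicate Kudo/Adem bookkeeping happens at this step.
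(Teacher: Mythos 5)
Your proof is correct, and its skeleton matches the paper's: reduce to a cohomology isomorphism via \Cref{lem:basic}, dispose of $\Q$ and odd primes using \Cref{lem:f_approx} and the acyclicity of the fiber of $U$, and do the real work in a mod-$2$ Serre spectral sequence. The difference is in the endgame. You apply \Cref{lem:basic} with $i=2n+3$, which forces you to compute $\HF_2^{2n+3}(G)$ and hence to show that $\Sq^1\iota_{2n+2}$ transgresses nontrivially, i.e.\ that $\Sq^1 U\neq 0$; your verification of this is right (and in fact the Kudo/permanent-cycle detour is redundant -- since $U$ restricts to $\Sq^2\iota_{2n+1}$ on the fiber of \Cref{fib:F}, $\Sq^1U$ restricts to $\Sq^3\iota_{2n+1}$, which is already nonzero in $\HF_2^{2n+4}(K(\Z,2n+1))$, so $\Sq^1U\neq 0$ with no spectral sequence needed). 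The paper instead applies \Cref{lem:basic} only through degree $2n+2$, obtaining that $\pi_*g$ is an isomorphism for $*\leq 2n+1$ and a surjection on $\pi_{2n+2}$, and then closes the gap by noting $\pi_{2n+2}BU(n)\simeq\pi_{2n+2}G\simeq\Z/2$, so a surjection is automatically an isomorphism. That shortcut makes the entire degree-$(2n+3)$ analysis -- the step you correctly flag as the main obstacle -- unnecessary; what your longer route buys in exchange is the slightly stronger conclusion that $g$ is also surjective on $\pi_{2n+3}$, which the lemma does not require.
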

\begin{proof} Recall that $f $ is an equivalence on $k$-cohomology through degree $2n+3$ if $k=\mathbb Q$ or $k=\mathbb F_p$ with $p$ an odd prime. Since $K(\Z/2, 2n+3)$ is trivial after rationalization or $p$-completion at an odd prime, the same is true for $g$.

Note that $\HF_2^*(g)$ is an equivalence through degree $2n+2$. This follows from the Serre spectral sequence associated with the fibration $G \to F \to K(\Z/2,2n+3)$, where the cohomology of $G$ through degree $2n+2$ is identified with the submodule of the cohomology of $F$ generated by the preimages of $c_1,\ldots, c_n \in \HF_2^*(BU).$
By \Cref{lem:basic}, $\pi_*g$ is an isomorphism for $*\leq 2n+1$ and a surjection for $*=2n+2$. Since $\pi_{2n+2}BU(n) \simeq \pi_{2n+2}G \simeq \Z/2$, we are done.
\end{proof}

\begin{cor}\label{thm:Gworks} The map $g\: BU(n) \to G$ induces a bijection
\[ [\CP^{n+1}, BU(n)]\simeq [\CP^{n+1},G].\]
\end{cor}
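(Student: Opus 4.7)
The plan is to reduce this corollary directly to Lemma \ref{lemG} using the skeletality dictionary set up in Section \ref{sec:not}. Since $\CP^{n+1}$ is a manifold of real dimension $2n+2$, it is $(2n+2)$-skeletal, so for every space $X$ the $(2n+2)$-truncation map $X \to \tau_{\leq 2n+2}X$ induces a natural bijection $[\CP^{n+1},X] \xrightarrow{\cong} [\CP^{n+1},\tau_{\leq 2n+2}X]$.

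First I would apply this naturality to the map $g\colon BU(n)\to G$, which gives a commutative square of sets whose vertical arrows (the truncation maps applied to $BU(n)$ and to $G$) are both bijections and whose bottom horizontal arrow is postcomposition with $\tau_{\leq 2n+2}g$. By Lemma \ref{lemG}, the map $\tau_{\leq 2n+2}g$ is an equivalence, so postcomposition with it is a bijection. Two-out-of-three on the square then yields the claimed bijection $[\CP^{n+1},BU(n)] \cong [\CP^{n+1},G]$.

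There is no serious obstacle at this stage; all of the real content, namely the identification of $\pi_*g$ in the relevant range via the Serre spectral sequence for $F$ and the one-dimension-higher correction by $U$, was absorbed into Lemma \ref{lemG}. The only subtlety worth flagging is that $G$ was defined only for $n$ even, so the statement is implicitly restricted to this case; the corresponding corollary for $n$ odd is the analogous (and even easier) identification $[\CP^{n+1},BU(n)]\cong [\CP^{n+1},F]$, which follows in the same way from \Cref{lem:f_approx}.
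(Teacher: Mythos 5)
Your argument is correct and is exactly the (implicit) deduction the paper intends: the corollary follows from Lemma \ref{lemG} together with the $(2n+2)$-skeletality of $\CP^{n+1}$ and naturality of the truncation maps, and your remark that the statement is implicitly restricted to $n$ even (with the $n$ odd analogue using $F$ and \Cref{lem:f_approx}) is also consistent with the paper's setup.
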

$G$ is a particularly useful approximation to $BU(n)$ for our purposes. Consider \Cref{diag:g}. A map from $\CP^{n+1}$ to $BU$ is equivalent to a stable bundle; a lift to $G$ exists if and only if $c_{n+1}$ of the stable bundle is zero. 
This reproves a classical result:
\begin{rmk}\label{thm:suff} A stable complex bundle on $\CP^{n+1}$ has a rank $n$ representative if and only if its $(n+1)$-st Chern class vanishes.
\end{rmk}

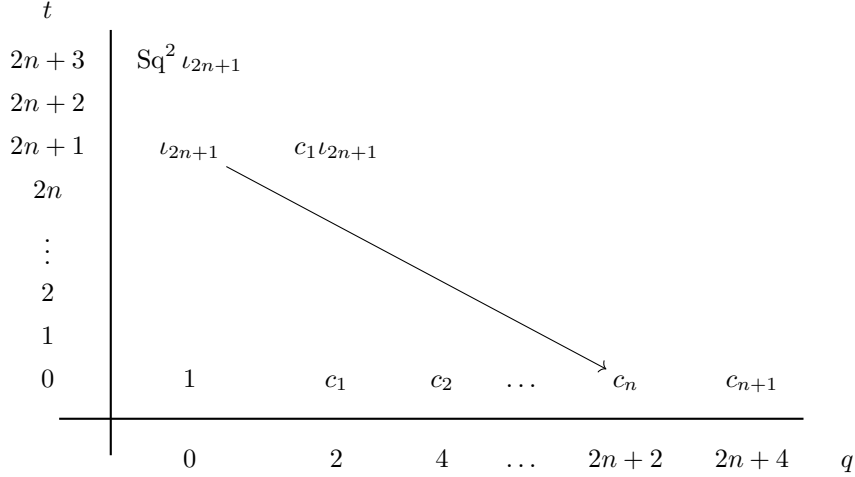
\begin{figure}[h]
\centering
\textbf{The $E_2$-page of the Serre spectral sequence computing $\HF_2^*(F)$.}\par\medskip
\begin{tikzpicture}
\matrix (m) [matrix of math nodes,
nodes in empty cells,nodes={minimum width=2.5ex,
minimum height=2.5ex,outer sep=-1pt},
column sep=.25ex,row sep=.25ex]{
t&\\
2n+3 & &\Sq^2\iota_{2n+1} & & & & & & & & & & && \\
2n+2&& & & & & & & & & & & && \\
2n+1 && \iota_{2n+1} & & c_1\iota_{2n+1} & & & & & & & & && \\
2n && & & & & & & & & & & && \\
\vdots\\
2 && & & & & & & & & & & && \\
1 && & & & & & & & & & & && \\
0&&1 & & c_1 & & c_2 && \hdots &&  c_{n+1} &  &c_{n+2} \\
&& & & & & & & & & & & && \\
\quad\strut & &0  & & 2  && 4 && \hdots& & 2n+2 & &2n+4& & q \strut \\};
\draw[->] (m-4-3.south east) -- (m-9-11.north west);
\draw[thick] (m-1-2) -- (m-11-2) ;
\draw[thick] (m-10-1) -- (m-10-14) ;\end{tikzpicture}\caption{Multiplicative generators for the $E_2$-page are indicated, as well as the class $c_1\iota_{2n+1}$ since it is significant.}\label{fig1}\end{figure}

\subsection{The number of representatives for a given stable bundle on $\CP^{n+1}$ class with vanishing $n$-th Chern class} Let $n$ be even.
 By \Cref{prop:bound}, the number of homotopy classes of maps from $\CP^{n+1}$ to $G$ with fixed Chern classes is at most $2$. 

From the definition of $F$ and $G$, we have fiber sequences:
\begin{equation}\label{fib:G} G \to F \xrightarrow{U} K(\Z/2,2n+3)\end{equation}
and
\begin{equation}\label[diagram]{fib:F}K(\Z,2n+1) \xrightarrow{a} F \to BU.\end{equation}
From \Cref{diag:f}, a homotopy class $V\: \CP^{n+1} \to BU$ with 
vanishing $(n+1)$-st Chern class lifts uniquely to $F$. 
Given a lift $\tilde V\: \CP^{n+1} \to F$, the obstruction to lifting to 
$G$ lies in 
$H^{2n+3}(\CP^{n+1};\Z/2)=0$. Let 
\begin{equation}\label[diagram]{eq:ucirc}U\circ -\: F^{\CP^{n+1}} \to K(\Z/2,2n+3)^{\CP^{n+1}}\end{equation}
be the morphism on mapping spaces given by post-composition with $U$. We point $F^{\CP^{n+1}}$by $\tilde V$ and we point $K(\Z/2,2n+3)^{{\CP^{n+1}}}$ by the zero map. Applying $\pi_1$ to \Cref{eq:ucirc}, we obtain a map
\begin{equation}\label{eq:pi1u} \pi_1(U\circ -) \: \pi_1(F^{\CP^{n+1}}) \to \pi_1(K(\Z/2,2n+3)^{{\CP^{n+1}}}). 
\end{equation} 
By \cite[Lemma 2.3]{Opie-r3p5}, we have the following:
\begin{lemma}\label{lem:Ucirc} Let $\pi_1(U\circ -)$ be as in \Cref{eq:pi1u}. The choices of lift of $V\: \CP^{n+1} \to F$ to $G$ are a torsor for $\op{Coker}\left(\pi_1(U\circ -\right))$. \end{lemma}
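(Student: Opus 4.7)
The plan is to apply the mapping-space functor $(-)^{\CP^{n+1}}$ to the defining fiber sequence $G \to F \xrightarrow{U} K(\Z/2,2n+3)$, producing a fiber sequence of pointed spaces
\[
G^{\CP^{n+1}} \to F^{\CP^{n+1}} \xrightarrow{U\circ -} K(\Z/2,2n+3)^{\CP^{n+1}},
\]
and then to read off the set of lifts from its long exact sequence in homotopy. First, a reference lift $\tilde V\: \CP^{n+1} \to G$ of $V$ exists because, as noted just before the lemma, the obstruction to lifting lies in $H^{2n+3}(\CP^{n+1};\Z/2)=0$. Take $\tilde V$ as the basepoint of $F^{\CP^{n+1}}$ and the constant zero map as the basepoint of $K(\Z/2,2n+3)^{\CP^{n+1}}$. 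With these basepoints, the set of homotopy classes of lifts of $V$ to $G$ is the preimage of $[V] \in \pi_0(F^{\CP^{n+1}})$ under the induced map $i_*\: \pi_0(G^{\CP^{n+1}})\to\pi_0(F^{\CP^{n+1}})$.

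The key step is the associated exact sequence
\[
\pi_1(F^{\CP^{n+1}}) \xrightarrow{\pi_1(U\circ -)} \pi_1(K(\Z/2,2n+3)^{\CP^{n+1}}) \xrightarrow{\delta} \pi_0(G^{\CP^{n+1}}) \xrightarrow{i_*} \pi_0(F^{\CP^{n+1}}).
\]
By exactness, the preimage of $[V]$ under $i_*$ equals the image of the connecting map $\delta$, whose kernel is the image of $\pi_1(U\circ -)$. Hence $\delta$ descends to a bijection
\[
\op{Coker}\bigl(\pi_1(U\circ -)\bigr) \xrightarrow{\;\cong\;} \{\text{homotopy classes of lifts of } V\},
\]
sending the identity coset to the chosen $\tilde V$. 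A different choice of reference lift translates this bijection by an element of the cokernel, so the set of lifts is canonically a torsor over $\op{Coker}(\pi_1(U\circ -))$.

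The entire argument is a formal consequence of applying the long exact sequence of a fibration to the mapping-space fibration above. The main subtlety worth double-checking is that the bijection produced by $\delta$ is compatible with the evident translation action on the set of lifts coming from precomposing a lift with self-homotopies of $V$ in $F^{\CP^{n+1}}$, so that the identification really carries the claimed torsor structure; this is a standard feature of the connecting map in the LES and requires no input beyond the existence of the base lift $\tilde V$.
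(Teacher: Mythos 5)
Your argument is correct: the identification of the set of lifts with $\op{Coker}(\pi_1(U\circ -))$ via the connecting map in the long exact sequence of the mapping-space fibration $G^{\CP^{n+1}} \to F^{\CP^{n+1}} \to K(\Z/2,2n+3)^{\CP^{n+1}}$ is exactly the standard obstruction-theoretic fact being invoked. The paper does not spell this out but simply cites \cite[Lemma 2.3]{Opie-r3p5}, whose proof is the same LES argument you give, so your proposal matches the intended reasoning.
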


To compute $\op{Coker}\left(\pi_1(U\circ -)\right)$, we make some auxiliary observations.
\begin{lemma}\label{lem:surj} The map $a\:K(\Z,2n+1) \to F$ from \Cref{fib:F} induces a surjection
\[ \pi_1(a\circ -)\: \pi_1\left(K(\Z,2n+1)^{\CP^{n+1}} \right) \to \pi_1\left( F^{\CP^{n+1}}\right) .\]
\end{lemma}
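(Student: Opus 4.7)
The plan is to read off the surjectivity from the long exact sequence of homotopy groups for a fibration of mapping spaces. Applying $(-)^{\CP^{n+1}}$ to the fibration \Cref{fib:F} yields a fibration
\[K(\Z,2n+1)^{\CP^{n+1}} \xrightarrow{a \circ -} F^{\CP^{n+1}} \xrightarrow{p \circ -} BU^{\CP^{n+1}},\]
where $p\: F \to BU$ denotes the structure map. Since $BU$ and $K(\Z,2n+1)$ are infinite loop spaces, so is their homotopy fiber $F$, and so is each of the three mapping spaces above; in particular the $\pi_1$ of each is canonically basepoint-independent, which lets us freely move between the basepoints of \Cref{lem:Ucirc} (e.g., $\tilde V \in F^{\CP^{n+1}}$) and the constant maps at the standard basepoints of $F$ and $BU$.

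Taking $\pi_*$ of the fibration yields the exact fragment
\[\pi_1 K(\Z,2n+1)^{\CP^{n+1}} \xrightarrow{\pi_1(a \circ -)} \pi_1 F^{\CP^{n+1}} \longrightarrow \pi_1 BU^{\CP^{n+1}},\]
so surjectivity of $\pi_1(a \circ -)$ follows from vanishing of $\pi_1 BU^{\CP^{n+1}}$. Using the mapping-space adjunction together with the splitting $\Sigma \CP^{n+1}_+ \simeq S^1 \vee \Sigma \CP^{n+1}$,
\[\pi_1 BU^{\CP^{n+1}} \cong [\Sigma \CP^{n+1}_+, BU]_* \cong \pi_1 BU \oplus \widetilde{KU}^{-1}(\CP^{n+1}).\]
The first summand is zero by Bott periodicity, and the second is zero because $\CP^{n+1}$ has only even-dimensional cells, so the Atiyah--Hirzebruch spectral sequence computing $\widetilde{KU}^{*}(\CP^{n+1})$ is concentrated in even total degree. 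This proves the lemma.

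The main point requiring care is basepoint bookkeeping for the long exact sequence, since $a \circ -$ does not literally send the zero map in $K(\Z,2n+1)^{\CP^{n+1}}$ to the chosen basepoint $\tilde V$ of $F^{\CP^{n+1}}$; however, this is entirely formal given the $H$-space (in fact infinite loop space) structures on all three spaces, which canonically identify $\pi_1$ at arbitrary basepoints. Beyond that formality, the only real input is the Atiyah--Hirzebruch vanishing of $\widetilde{KU}^{-1}(\CP^{n+1})$.
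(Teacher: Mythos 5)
Your proof is correct and is essentially the paper's own argument: apply $(-)^{\CP^{n+1}}$ to the fibration \cref{fib:F}, take the long exact sequence in homotopy, and conclude from $\pi_1\bigl(BU^{\CP^{n+1}}\bigr)\cong K^{-1}(\CP^{n+1})=0$ (the paper simply asserts this vanishing, which you justify via the Atiyah--Hirzebruch spectral sequence). One small caveat: $F$ is not obviously an infinite loop space, since $c_{n+1}\: BU\to K(\Z,2n+2)$ is not an infinite loop map (Chern classes are not additive), but this is inessential --- the basepoint bookkeeping is handled by pointing the fiber over $V$ at $\tilde V$ and using the principal action of $K(\Z,2n+1)$ on $F$, exactly as the paper does.
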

\begin{proof} Consider the 
fiber sequence of mapping spaces 
\[K(\Z,2n+1)^{\CP^{n+1}} \xrightarrow{a\circ -} F^{\CP^{n+1}} \to BU^{\CP^{n+1}},\]
where $F^{\CP^5}$ and $BU^{\CP^{n+1}}$ are pointed by $\tilde V$ and $V$, respectively, and the fiber is pointed by zero. The induced long exact sequence on homotopy includes a portion
\[ \pi_1(K(\Z,2n+1)^{\CP^{n+1}}) \xrightarrow{\pi_1(a\circ -)} \pi_1(F^{\CP^{n+1}}) \to \pi_1(BU^{\CP^{n+1}})\simeq K^1(\CP^{n+1})=0.\]
\end{proof}
\Cref{lem:surj} immediately implies the following two results:
\begin{cor}\label{surj:cor1} Consider the action of 
$\pi_1(K(\Z,2n+1)^{\CP^{n+1}})$ on $\pi_1(F^{\CP^{n+1}})$ given on $s \in \pi_1(K(\Z,2n+1)^{\CP^{n+1}})$ and $x \in \pi_1(F^{\CP^{2n+1}})$ by 
\[(s,x) \mapsto \left(\pi_1(a\circ -)(s)\right)\cdot x.\]
This action is transitive. 
\end{cor}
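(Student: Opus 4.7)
The plan is to observe that this is essentially a formal consequence of \Cref{lem:surj}. The action described is, concretely, translation: an element $s \in \pi_1(K(\Z,2n+1)^{\CP^{n+1}})$ acts on $x \in \pi_1(F^{\CP^{n+1}})$ by left multiplication by its image $\pi_1(a\circ -)(s)$. So proving transitivity of this action amounts to showing that for any $x,y \in \pi_1(F^{\CP^{n+1}})$, we can find $s$ with $\pi_1(a\circ -)(s) = y x^{-1}$ (or $y-x$ additively). This is exactly the statement that $\pi_1(a\circ -)$ is surjective onto $\pi_1(F^{\CP^{n+1}})$.

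First I would note that $F$ is an infinite loop space: it sits in the fiber sequence $K(\Z,2n+1) \to F \to BU$ of infinite loop spaces (the map $c_{n+1}$ is an infinite loop map since it represents a stable cohomology operation). Consequently $F^{\CP^{n+1}}$ is an infinite loop space and $\pi_1(F^{\CP^{n+1}})$ is an abelian group, so translation is a well-defined group action. Then I would apply \Cref{lem:surj} directly: given $x,y \in \pi_1(F^{\CP^{n+1}})$, choose $s \in \pi_1(K(\Z,2n+1)^{\CP^{n+1}})$ with $\pi_1(a\circ -)(s) = y-x$, and conclude that $s$ sends $x$ to $y$ under the action.

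There is no real obstacle — the work has already been done in establishing the surjectivity statement of \Cref{lem:surj}. The only thing to be careful about is confirming that the group structure on $\pi_1(F^{\CP^{n+1}})$ is abelian and that the stated action is well defined, both of which follow from $F$ being an infinite loop space.
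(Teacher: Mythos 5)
Your proposal is correct and matches the paper, which offers no separate argument and simply states that \Cref{lem:surj} immediately implies this corollary: transitivity of the translation action is precisely surjectivity of $\pi_1(a\circ -)$. Your extra remarks on the group structure are fine but not strictly needed, since transitivity of the action $(s,x)\mapsto \left(\pi_1(a\circ -)(s)\right)\cdot x$ is equivalent to surjectivity regardless of commutativity.
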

\begin{cor}\label{surj:cor2}
Let $\pi_{1}(U\circ -)$ be as in \Cref{eq:pi1u}. Let $ \tilde V\: \CP^{n+1} \to F$ be a given basepoint in the mapping space $F^{\CP^5}$, and let $A \in \pi_1( F^{\CP^{n+1}})$ be arbitrary. Then
\[ \op{Coker}\left( \pi_1(U\circ - )\right) =
\left\{ \pi_1(U\circ -)(\sigma \cdot A) \, \,| \,\, \sigma \in \pi_1( K(\Z,2n+1)^{\CP^{n+1}})\right\}. \]
\end{cor}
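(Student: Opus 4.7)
The plan is to deduce this directly from \Cref{surj:cor1}, which asserts that the action of $\pi_1(K(\Z,2n+1)^{\CP^{n+1}})$ on $\pi_1(F^{\CP^{n+1}})$ induced by $\pi_1(a\circ -)$ is transitive. Transitivity, applied to the fixed basepoint $A$ in the statement of the corollary, means that every element of $\pi_1(F^{\CP^{n+1}})$ can be written in the form $\sigma \cdot A$ for some $\sigma \in \pi_1(K(\Z,2n+1)^{\CP^{n+1}})$. In other words, the single orbit of $A$ already exhausts all of $\pi_1(F^{\CP^{n+1}})$.

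The next step is to apply the homomorphism $\pi_1(U \circ -)$ to this parametrization. This gives
\[ \op{im}\bigl(\pi_1(U\circ -)\bigr) = \bigl\{\pi_1(U\circ -)(\sigma \cdot A) \bigm| \sigma \in \pi_1(K(\Z,2n+1)^{\CP^{n+1}})\bigr\}. \]
Since the cokernel is by definition the quotient of $\pi_1(K(\Z/2,2n+3)^{\CP^{n+1}})$ by this image, identifying the image with the displayed set of orbit evaluations yields the asserted equality, where $\op{Coker}(\pi_1(U\circ -))$ is read as the subgroup of the target to be quotiented out (equivalently, the set of its generating classes).

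The main obstacle is essentially nonexistent: once \Cref{surj:cor1} is in hand, the present corollary is a routine rephrasing, with all substantive work concentrated in the earlier surjectivity statement of \Cref{lem:surj} (which in turn used $K^1(\CP^{n+1})=0$). The strategic value of the reformulation is that it reduces the computation of $\op{Coker}(\pi_1(U\circ -))$ to evaluating $\pi_1(U \circ -)$ on the orbit of a single convenient class $A$, rather than parametrizing all of $\pi_1(F^{\CP^{n+1}})$ directly. This sets up the downstream calculation of the cokernel (and hence the cardinality $\phi(n,\vec a)$ in part (ii) of the main theorem) by choosing $A$ adapted to a stable bundle $V$ with prescribed Chern classes and varying $\sigma$ using Steenrod-theoretic input from \Cref{K9_cohomology}.
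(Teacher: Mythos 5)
Your derivation is correct and matches the paper, which states that \Cref{surj:cor2} (together with \Cref{surj:cor1}) follows immediately from \Cref{lem:surj} and gives no further argument: transitivity of the action makes $\{\sigma\cdot A\}$ all of $\pi_1(F^{\CP^{n+1}})$, so the displayed set is exactly $\op{im}(\pi_1(U\circ -))$, which determines the cokernel. Your remark that the stated equality is an abuse identifying the cokernel with the image that one quotients by is also the right reading, consistent with how the corollary is used in \Cref{cor:rephrase1}.
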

\begin{defn}\label{defn:coaction}
Let $m\: K(\Z,2n+1) \times F \to F$ denote the natural action of the fiber on the total space in the fibration $K(\Z,2n+1) \to F \to BU.$ Let \[m^*\: \HZt^*(F) \to  \HZt^*\left(K(\Z,2n+1)\times F\right) \] be the induced coation on cohomology. \end{defn}
Thus, $m^*U=U \circ m$ represents a class in $\HZt^{2n+3}\left(K(\Z,2n+3) \times F\right)$. Let $A \: S^1 \times \CP^{n+1} \to F$ represent an arbitrary class in $\pi_1(F^{\CP^{n+1}})$, meaning that \[A|_{\{0\} \times \CP^{n+1}} \simeq \tilde V\: \CP^{n+1} \to F.\]
Consider the (non-commutative) diagram:
\begin{equation}\label[diagram]{cd:action}
\begin{tikzcd}
{S^1 \times \CP^{n+1}}\ar[r,"A"] \ar[dr,"{(\iota_1t^n, A)}\,\,\,\,\,\,\,\,\,\,\,\," below]
& F \ar[r,"U"] 
& K(\Z/2,2n+3) \\
& {K(\Z,2n+1) \times F} \ar[u,"m"]\ar[ur,dashed,"{m^*U}"],
\end{tikzcd}
\end{equation}
where:
\begin{itemize}
\item $\iota_1$ represents a generator for $H^1(S^1;\Z)$; 
\item $t$ represents a generator for $H^2(\CP^{n+1};\Z)$; and therefore
\item $\iota_1t^n$ represents a generator for $H^{2n+1}(S^1 \times \CP^{n+1};\Z).$
\end{itemize}
Let $\sigma$ generated $\pi_1(K(\Z,2n+1)^{\CP^{n+1}}).$ Note that the cohomology class represented by $$\pi_1(U \circ -)(\sigma \cdot A) $$ is equal to $$m^*U(\iota_1t^4,A)\in \HZt^{2n+3}(S^1\times \CP^{n+1})\simeq \pi_1\left( K(\Z/2,2n+3)^{\CP^{n+1}}\right).$$ Thus, by \Cref{surj:cor2}:
\begin{cor}\label{cor:rephrase1} $\op{Coker}\left(\pi_1(U\circ -)\right)=0$ if and only if $m^*U(\iota_1t^n,A) \neq U^*A$.
\end{cor}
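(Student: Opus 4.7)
The plan is to derive this corollary directly from \Cref{surj:cor2} by reinterpreting the generator of the acting group in terms of cohomology classes and then performing a short calculation in $\Z/2$.

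First I would identify $\pi_1(K(\Z,2n+1)^{\CP^{n+1}}) \cong H^{2n}(\CP^{n+1};\Z) \cong \Z$, with a generator $\sigma$ represented by the map $S^1 \times \CP^{n+1} \to K(\Z,2n+1)$ whose cohomology class is $\iota_1 t^n$. By the definition of the fiber action in \Cref{surj:cor1}---namely the map $m$ of \Cref{defn:coaction}---the class $\sigma \cdot A \in \pi_1(F^{\CP^{n+1}})$ is represented by the composite $S^1 \times \CP^{n+1} \xrightarrow{(\iota_1 t^n, A)} K(\Z,2n+1) \times F \xrightarrow{m} F$. Post-composing with $U$ then identifies $\pi_1(U\circ -)(\sigma \cdot A)$ with the class $m^*U(\iota_1 t^n, A)$ displayed in \Cref{cd:action}, while $\pi_1(U\circ -)(A) = U^*A$ by definition.

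Next I would use that $\pi_1(U\circ -)$ is a group homomorphism with target $\pi_1(K(\Z/2,2n+3)^{\CP^{n+1}}) \cong \Z/2$. By \Cref{surj:cor2}, its image equals $\{\pi_1(U\circ -)(\sigma^k \cdot A) : k \in \Z\}$, which as a subgroup of $\Z/2$ is generated by $U^*A$ and $m^*U(\iota_1 t^n, A)$. If these two elements of $\Z/2$ differ, the image is all of $\Z/2$ and the cokernel vanishes. If instead they agree, the image reduces to $\{U^*A\}$; being a subgroup this forces $U^*A = 0$, so the cokernel is $\Z/2$. Combining the two cases yields the stated biconditional.

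The main thing to check is the first identification: that the abstract action of \Cref{surj:cor1} on representatives is really realized by applying $m$ to the pair $(\iota_1 t^n, A)$. This amounts to a careful but routine exercise in moving between $\pi_1$ of a mapping space and homotopy classes of maps out of $S^1 \times \CP^{n+1}$, together with the description of the fiber-action-on-total-space in terms of $m$ from \Cref{defn:coaction}. Once this geometric picture is in place, the $\Z/2$ calculation closing out the argument is immediate.
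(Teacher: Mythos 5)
Your argument is correct and takes essentially the same route as the paper, which presents this corollary as an immediate consequence of \Cref{surj:cor2} together with the identification, via \Cref{cd:action}, of $\pi_1(U \circ -)(\sigma \cdot A)$ with $m^*U(\iota_1 t^n, A)$. The only cosmetic point is that the image of $\pi_1(U\circ -)$ is the cyclic subgroup generated by the single element $m^*U(\iota_1t^n,A)-U^*A$ (since $\pi_1(a\circ -)$ is a surjection from the cyclic group $\pi_1(K(\Z,2n+1)^{\CP^{n+1}})\cong\Z$, so $\pi_1(F^{\CP^{n+1}})$ is cyclic), which yields the biconditional slightly more directly than your two-case analysis; both versions are valid.
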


To apply \Cref{cor:rephrase1}, we investigate the cohomology classes $U$ and $m^*U$ in more detail.
\begin{prop}\label{prop:coaction} Consider $\HZt^*\left(K(\Z,2n+1)\times F)\right) \simeq \HZt^*\left(K(\Z,2n+1)\right) \otimes  \HZt^*\left(F\right).$  Let $\iota_{2n+1}'$ generate $\HZt^{2n+1}\left(K(\Z,2n+1)\right)$.
Let $U \in \HZt^*(F)$ be as in \Cref{def:UG} and let $m$ and $m^*$ be as in \Cref{defn:coaction}. Then
\[m^*U = U \otimes 1 + 1\otimes \Sq^2\iota_{2n+1}'+ c_1 \otimes \iota_{2n+1}',\]
where $c_1\in \HZt^2(F)$ is the pullback of the universal first Chern class in the cohomology of $BU$.
\end{prop}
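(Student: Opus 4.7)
Write $\pi_{K}\:K(\Z,2n+1)\times F\to K(\Z,2n+1)$ and $\pi_{F}\:K(\Z,2n+1)\times F\to F$ for the projections. The plan is to enumerate possible terms in $m^{*}U$ via Künneth, then fix two undetermined coefficients using (i) the fiber inclusion and (ii) the interpretation of $U$ as the secondary operation associated to the relation $\Sq^{2}c_{n+1}=c_{1}c_{n+1}\in\HZt^{2n+4}(BU)$ (valid for $n$ even, as noted in \Cref{MoorePost}); the latter step is the main obstacle.

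First I would pin down the shape of $m^{*}U$. By \Cref{K9_cohomology}, $\HZt^{j}(K(\Z,2n+1))$ for $0\leq j\leq 2n+3$ is concentrated in degrees $0,2n+1,2n+3$, generated by $1$, $\iota_{2n+1}'$, and $\Sq^{2}\iota_{2n+1}'$. The Serre spectral sequence discussion preceding \Cref{def:UG} gives $\HZt^{2}(F)=\F_{2}\{c_{1}\}$ and $\HZt^{2n+3}(F)=\F_{2}\{U\}$, while $\HZt^{*}(BU)$ vanishes in odd degrees. Using $m\circ(e\times\op{id}_{F})=\op{id}_{F}$ to fix the leading term, one obtains
\[m^{*}U \;=\; \pi_{F}^{*}U \;+\; \alpha\cdot\pi_{K}^{*}\Sq^{2}\iota_{2n+1}' \;+\; \beta\cdot\bigl(\pi_{F}^{*}c_{1}\cdot\pi_{K}^{*}\iota_{2n+1}'\bigr)\]
for some $\alpha,\beta\in\F_{2}$. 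To determine $\alpha$, I would pull back along $\op{id}\times i\:K(\Z,2n+1)\times K(\Z,2n+1)\to K(\Z,2n+1)\times F$, where $i$ is the fiber inclusion; since $m\circ(\op{id}\times i)=i\circ\mu$ for $\mu$ the H-space addition and $i^{*}U=\Sq^{2}\iota_{2n+1}'$, the left side becomes $\mu^{*}\Sq^{2}\iota_{2n+1}'=\Sq^{2}\iota_{2n+1}'\otimes 1+1\otimes\Sq^{2}\iota_{2n+1}'$ (using primitivity of $\iota_{2n+1}'$, the vanishing $\Sq^{1}\iota_{2n+1}'=0$ from \Cref{K9_cohomology}, and the Cartan formula), while the right side has no cross term due to $i^{*}c_{1}=0$, forcing $\alpha=1$.

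The main obstacle is fixing $\beta$, for which I would realize $U$ as a secondary operation. A map $(V,h)\:X\to F$ is equivalent to $V\:X\to BU$ together with a null-homotopy $h$ of $c_{n+1}(V)$; this $h$ induces null-homotopies of both $\Sq^{2}c_{n+1}(V)$ and $c_{1}(V)c_{n+1}(V)$, whose difference represents a class in $\HZt^{2n+3}(X)$ (as the two $(2n+4)$-classes agree in $\HZt^{*}(BU)$). This secondary class restricts to $\Sq^{2}\iota_{2n+1}'$ on the fiber, so by one-dimensionality of $\HZt^{2n+3}(F)$ it must equal $(V,h)^{*}U$. Applied to the two natural lifts $m,\pi_{F}\:K(\Z,2n+1)\times F\to F$ of $p\circ\pi_{F}$, whose torsor difference is $\pi_{K}^{*}\iota_{2n+1}'$, the two null-homotopies shift respectively by $\Sq^{2}\pi_{K}^{*}\iota_{2n+1}'$ and by $\pi_{F}^{*}c_{1}\cdot\pi_{K}^{*}\iota_{2n+1}'$, so
\[m^{*}U-\pi_{F}^{*}U \;=\; \pi_{K}^{*}\Sq^{2}\iota_{2n+1}' \;+\; \pi_{F}^{*}c_{1}\cdot\pi_{K}^{*}\iota_{2n+1}',\]
which recovers the claimed formula with $\beta=1$. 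The delicate point is making the secondary-operation identification precise; this can be done by tracking a specific null-homotopy in the universal path-loop fibration $K(\Z,2n+1)\to PK(\Z,2n+2)\to K(\Z,2n+2)$ and invoking naturality.
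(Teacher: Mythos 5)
Your proposal is correct, but it takes a genuinely different route from the paper's proof. The paper works at the cochain level of the Serre spectral sequence for $K(\Z,2n+1)\to F\to BU$: since $d_{2n+2}(c_1\iota_{2n+1})=c_1c_{n+1}=\Sq^2c_{n+1}=d_{2n+4}(\Sq^2\iota_{2n+1})$, an actual cocycle representing $U$ in the filtered complex is $\Sq^2\iota_{2n+1}\otimes 1+\iota_{2n+1}\otimes c_1$, and $m^*U$ is then read off from the map of fibrations induced by the action together with primitivity, $m_1^*(\iota_{2n+1})=\iota_{2n+1}+\iota_{2n+1}'$. You instead enumerate the K\"unneth components of $m^*U$ (correctly using that $\HZt^{2}(F)$ and $\HZt^{2n+3}(F)$ are each one-dimensional), fix the $\Sq^2\iota_{2n+1}'$ coefficient by restricting to the fiber where $m$ becomes the H-space multiplication, and fix the $c_1\otimes\iota_{2n+1}'$ coefficient by realizing $U$ as the secondary operation attached to the relation $\Sq^2c_{n+1}=c_1c_{n+1}$ and computing how its value shifts when the nullhomotopy of $c_{n+1}$ is altered by $\pi_K^*\iota_{2n+1}'$. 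The two arguments rest on the same relation; your secondary-operation step is the homotopy-theoretic shadow of the paper's cochain zig-zag. What your route buys: the identification of the secondary class with $U$ is forced by one-dimensionality of $\HZt^{2n+3}(F)$ once one checks it restricts to $\Sq^2\iota_{2n+1}$ on the fiber, and the indeterminacy formula then determines both unknown coefficients simultaneously, so your fiber-restriction computation of $\alpha$ becomes a consistency check. What it costs is exactly the point you flag: one must carefully set up the twisted secondary operation (the operation $\Sq^2+c_1\cdot$ is linear only fiberwise over $BU$, and a universal nullhomotopy of $(\Sq^2+c_1)\circ c_{n+1}$ on $BU$ must be chosen, with ambiguity controlled by $\HZt^{2n+3}(BU)=0$) and verify that changing the nullhomotopy by $x$ changes the value by $\Sq^2x+c_1x$; the paper's explicit $E_1$-representative sidesteps this formalism entirely.
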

\begin{proof}
Recall from \Cref{def:UG} that $U$ is the cohomology class detected on the 
$E_\infty$-page of the Serre spectral sequence for $\HZt^*(F)$ by the class of $\Sq^2\iota_{2n+1}$ (see \Cref{fig1}). 
To compute $m^*U$, we need to find the correct representative on the $E_2$-page. 
The relation $d_{2n+4}(\Sq^2\iota_{2n+1})=[c_1c_{n+1}]=d_{2n+2}(c_1\iota_{2n+1})$ shows that, on the $E_1$-page, $U$ is represented by a cochain for the cohomology class $\Sq^2\iota_{2n+1} \otimes 1 + \iota_{2n+1}\otimes c_1$. This is a cochain in the double complex computing $\HZt^*(F)$ from cochains on $K(\Z,2n+1)$ tensored with cochains on $BU$.

To compute $m^*U$, note that the action of $K(\Z,2n+1)$ on $F$ induces a map of fibrations
\begin{equation}\label[diagram]{map_fibs}\begin{tikzcd}
K(\Z,2n+1) \times K(\Z,2n+1) \ar[r]\ar[d,"m_1"]& K(\Z,2n+1) \times F \ar[d,"m"]\ar[r]&BU\ar[d,"="] \\
K(\Z,2n+1) \ar[r] & F \ar[r] & BU,
\end{tikzcd}\end{equation}
where $m_1$ is the $H$-space multiplication on $K(\Z,2n+1)$.

The map of fibrations \Cref{map_fibs} induces a map of cohomological Serre spectral sequences in the opposite direction. The key analysis is of the Serre spectral sequence associated to \[K(\Z,2n+1) \times K(\Z,2n+1) \to K(\Z,2n+1) \times F \to BU. \] The $E_2$-page in the case $n=4$ is given in \Cref{fig2}.
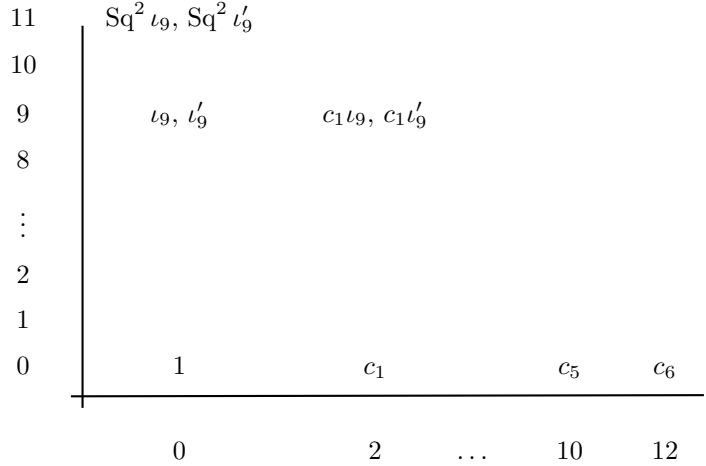
\begin{figure}[h]
\centering
\textbf{The $E_2$-page of the Serre spectral sequence computing $\HF_2^*\left(K(\Z,9) \times F \right)$ for $n=4$.}\par\medskip
\begin{tikzpicture}
\matrix (m) [matrix of math nodes,
nodes in empty cells,nodes={minimum width=2.5ex,
minimum height=2.5ex,outer sep=-1pt},
column sep=1ex,row sep=.5ex]{
11 & & \Sq^2\iota_9, \, \Sq^2 \iota_9'  & & & & & && \\
10 && &  & & & & && \\
9 && \iota_9, \, \iota_9' & & c_1\iota_9,\, c_1\iota_9'  && && && & \\
8 && && && && & \\
\vdots && && && && &\\
2 && &&  && && & \\
1&& && && && & \\
0&&1 && c_1 && &c_5 &  &c_6 \\
& & && && && & \\
\quad\strut 
&& 0 &  & 2&  \hdots  && 10 &&12 \strut \\};
\draw[thick] (m-1-2.east) -- (m-9-2.east) ;
\draw[thick] (m-9-2.north) -- (m-9-13.south) ;\end{tikzpicture}\caption{Multiplicative generators for the $E_2$-page are indicated, as well as the classes $c_1\iota_9$ and $c_1\iota_9'$.}\label{fig2}\end{figure}

Since $m^*(\iota_{2n+1})=\iota_{2n+1}+\iota_{2n+1}'$, we find that on the $E_2$-page:
\[m^*(\Sq^2\iota_{2n+1}+c_1\iota_{2n+1})=\Sq^2\iota_{2n+1}+c_1\iota_{2n+1}+\Sq^2\iota_{2n+1}'+c_1\iota_{2n+1}'.\]
Passing to $E_\infty$-pages, we obtain the result.
\end{proof}

\begin{cor}\label{cor:difference} Let $\tilde V\: \CP^{n+1} \to F$ represent a lift of $V\: \CP^{n+1} \to BU$, and suppose that $c_1(V)=a_1t \in \mathbb Z$. Let $A \in \pi_1(F^{\CP^{n+1}}, \tilde{V})$ be arbitrary. Then 
\[m^*U(\iota_1t^{n},A)-U^*(A)= a_1\iota_1t^{n+1} \in  \HZt^{2n+3}(S^1 \times \CP^5).\] 
\end{cor}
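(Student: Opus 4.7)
The plan is to apply \Cref{prop:coaction} to expand $m^*U$ in the K\"unneth decomposition and then pull each of the three resulting summands back along $(\iota_1 t^n, A)\: S^1 \times \CP^{n+1} \to K(\Z,2n+1) \times F$. The three summands will play distinct roles in the final answer: the summand $U \otimes 1$ matches $U^*(A)$ and cancels, the $\Sq^2$ summand will vanish for parity reasons, and the cross term involving $c_1$ produces $a_1 \iota_1 t^{n+1}$.

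The first summand $U \otimes 1$ pulls back via the second coordinate $A$ to $U^*(A)$, cancelling upon subtraction. For $1 \otimes \Sq^2 \iota_{2n+1}'$, pulling back along the first coordinate (and using that $\iota_1 t^n$ represents the classifying map $S^1 \times \CP^{n+1} \to K(\Z, 2n+1)$) gives $\Sq^2(\iota_1 t^n)$. The Cartan formula together with the instability $\Sq^i \iota_1 = 0$ for $i \geq 1$ reduces this to $\iota_1 \cdot \Sq^2(t^n) = n\, \iota_1 t^{n+1}$, which vanishes mod $2$ precisely because $n$ is even.

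For the cross term $c_1 \otimes \iota_{2n+1}'$, the pullback factors as $A^*(c_1) \cdot \iota_1 t^n$. The K\"unneth projection $\HZt^2(S^1 \times \CP^{n+1}) \xrightarrow{\cong} \HZt^2(\CP^{n+1})$ is an isomorphism, so $A^*(c_1)$ is determined by its restriction to $\{0\} \times \CP^{n+1}$; that restriction is $\tilde V^*(c_1) = V^*(c_1) = a_1 t$, since $c_1 \in H^2(F)$ is the pullback of the universal first Chern class and $\tilde V$ is a lift of $V$. Hence the cross term contributes $a_1 \iota_1 t^{n+1}$, yielding the claimed equality after summing and subtracting $U^*(A)$.

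The whole computation is essentially bookkeeping once \Cref{prop:coaction} is in hand, with the one genuine input being the Cartan-formula identity $\Sq^2 t^n = n\, t^{n+1}$ and its vanishing mod $2$ for $n$ even. The only real obstacle is keeping the K\"unneth tensor-factor conventions consistent between the statement of \Cref{prop:coaction} and the pullback computation.
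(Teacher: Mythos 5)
Your proposal is correct and follows essentially the same route as the paper: expand $m^*U$ via \Cref{prop:coaction}, cancel the $U\otimes 1$ term against $U^*(A)$, kill $\Sq^2(\iota_1 t^n)=\iota_1\Sq^2(t^n)=n\,\iota_1 t^{n+1}$ using that $n$ is even, and evaluate the cross term to $a_1\iota_1 t^{n+1}$. If anything, your write-up is slightly more careful than the paper's (which leaves the dependence on the parity of $n$ in $\Sq^2(t^n)=0$ implicit and has a typographical slip in the final displayed class).
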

\begin{proof} By \Cref{prop:coaction}, $m^*U-U=P^1\iota_{2n+1}'+c_1\iota_{2n+1}'.$

Therefore \begin{align*}(m^*U-U)(\iota_1t^{n},A)&=\Sq^2(\iota_1t^n)+c_1(V)\iota_1t^n\\
&= c_1(V)\iota_1 t^n,\end{align*}
since $\Sq^2(\iota_1t^n)=\iota_1\Sq^2(t^n)=0.$
\end{proof} 
We combine \Cref{cor:rephrase}, \Cref{cor:difference},  and the fact that $\pi_1\left(K(\Z/2,11)^{\CP^{n+1}}\right)=\Z/2$:
\begin{cor}\label{cor:rephrase} Let $n$ be even and let $V\: \CP^{n+1} \to BU$ be fixed and let $\tilde V\: \CP^{n+1} \to F$ lift $V$. Let $U$ be as in \Cref{def:UG}. Then
 \[\op{Coker}\left(\pi_1(U\circ -) \right)= \begin{cases}\Z/2& \text{ if  $c_1(V)\equiv 0 \pmod 2$ } \\
0 & \text{ if $c_1(V) \equiv 1 \pmod 2$}\end{cases}.\]
\end{cor}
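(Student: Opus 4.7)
The plan is to combine \Cref{cor:rephrase1} and \Cref{cor:difference}, after making the target group $\pi_1\bigl(K(\Z/2,2n+3)^{\CP^{n+1}}\bigr)$ explicit. By the standard adjunction relating $\pi_1$ of a mapping space at the zero map to cohomology of a suspension, there is an isomorphism $\pi_1\bigl(K(\Z/2,2n+3)^{\CP^{n+1}}\bigr) \cong \HZt^{2n+2}(\CP^{n+1}) \cong \Z/2$. Viewed inside $\HZt^{2n+3}(S^1 \times \CP^{n+1})$---the group appearing in the conclusion of \Cref{cor:difference}---this $\Z/2$ is generated by $\iota_1 t^{n+1}$, since $\HZt^{2n+3}(\CP^{n+1}) = 0$ for parity reasons and the K\"unneth formula gives
\[ \HZt^{2n+3}(S^1 \times \CP^{n+1}) \cong \HZt^1(S^1) \otimes \HZt^{2n+2}(\CP^{n+1}). \]

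With this identification in hand, I would fix a generator $\sigma$ of $\pi_1\bigl(K(\Z,2n+1)^{\CP^{n+1}}\bigr)$ and any element $A \in \pi_1(F^{\CP^{n+1}}, \tilde V)$, and read off \Cref{cor:difference}: the class $m^*U(\iota_1 t^n, A) - U^*A$ equals $a_1 \iota_1 t^{n+1}$. Under the previous identification, this class is $0$ when $a_1 \equiv 0 \pmod 2$ and is the nonzero element of $\Z/2$ when $a_1 \equiv 1 \pmod 2$. Notably, this difference depends only on $V$ and not on the auxiliary choice of $A$, which is what allows the dichotomy to be phrased cleanly.

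Finally, I would invoke \Cref{cor:rephrase1}: the cokernel of $\pi_1(U \circ -)$ is trivial precisely when this difference is nonzero. Because the ambient target is $\Z/2$, the cokernel is either $0$ or all of $\Z/2$, and the case analysis of the previous paragraph gives exactly the split stated. The main obstacle is the bookkeeping around identifications: one must verify that $\iota_1 t^{n+1}$ really represents the generator of the $\Z/2$ target under the mapping-space-to-cohomology adjunction, and that the formula of \Cref{cor:difference} matches the equation in \Cref{cor:rephrase1} under this identification. Once those compatibilities are in place, the proof is a direct assembly of the prior results with no additional computation required.
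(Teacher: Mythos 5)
Your proposal is correct and follows exactly the route the paper takes: the paper derives this corollary by combining \Cref{cor:rephrase1}, \Cref{cor:difference}, and the identification $\pi_1\bigl(K(\Z/2,2n+3)^{\CP^{n+1}}\bigr)\cong \Z/2$, which is precisely your assembly. The extra bookkeeping you supply (the K\"unneth identification of $\iota_1 t^{n+1}$ as the generator, and the observation that the difference is independent of $A$) is a welcome filling-in of details the paper leaves implicit.
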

Combining \Cref{cor:rephrase}, \Cref{lem:Ucirc}, and \Cref{prop:bound}, we prove part (ii) of \Cref{main:enumeration}:
\begin{thm}\label{main:thm1}
Let $n$ be even and let $V\: \CP^{n+1} \to BU$ represent a stable bundle with $c_{n+1}(V)=0$. If $c_1(V)$ is odd, there is a unique rank $n$ representative class for $V$, up to isomorphism. If $c_1(V)$ is even, there are two non-isomorphic rank $n$ representatives for $V$.
\end{thm}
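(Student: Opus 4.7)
The plan is to assemble the previously-established results into a two-stage lifting count, rather than to carry out any new computation. Specifically, by \Cref{thm:Gworks}, rank $n$ representatives for $V$ are in bijection with homotopy classes of lifts $\CP^{n+1} \to G$ of $V$ through the tower in \Cref{diag:g}. I would count these lifts in two stages via the intermediate space $F$.

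First, I would show that $V$ admits a unique lift $\tilde V\:\CP^{n+1} \to F$. Existence is immediate from the hypothesis $c_{n+1}(V)=0$ together with $F=\op{fib}(c_{n+1})$; uniqueness follows because the fiber of $F \to BU$ is $K(\Z,2n+1)$, and the ambiguity in lifts is controlled by $H^{2n+1}(\CP^{n+1};\Z)=0$.

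Second, I would count the lifts of $\tilde V$ to $G$. The obstruction lies in $H^{2n+3}(\CP^{n+1};\Z/2)=0$, so at least one lift exists. \Cref{lem:Ucirc} identifies the set of homotopy classes of such lifts with a torsor for $\op{Coker}(\pi_1(U\circ -))$, and \Cref{cor:rephrase} evaluates this cokernel: it is $\Z/2$ when $c_1(V)$ is even and $0$ when $c_1(V)$ is odd. Thus the number of lifts is $2$ or $1$, respectively.

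Finally, I would verify that this count is the count of non-isomorphic rank $n$ representatives. All lifts $\CP^{n+1}\to G$ counted above project to the single class $\tilde V$ and hence to $V$, so the associated bundles share the Chern classes $c_1,\ldots,c_n$ of $V$. Since \Cref{thm:Gworks} translates homotopy classes in $G$ to honest isomorphism classes of rank $n$ bundles, distinct lifts yield non-isomorphic bundles; and \Cref{prop:bound} ensures the count of $2$ is sharp rather than merely a lower bound. There is no real obstacle here beyond careful bookkeeping at the interface between the two lifting stages — the genuine content has already been extracted in \Cref{cor:rephrase} and the cohomology computation of $m^*U$ in \Cref{prop:coaction}.
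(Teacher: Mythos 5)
Your proposal is correct and follows essentially the same route as the paper, which proves this theorem by combining \Cref{lem:Ucirc}, \Cref{cor:rephrase}, and \Cref{prop:bound} in exactly the two-stage lifting framework (unique lift to $F$, then a torsor of lifts to $G$) that you describe. The only difference is that you spell out the bookkeeping the paper leaves implicit.
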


\section{Extending vector bundles on $\CP^{n+2}$}\label{Extending}

Our goal in this section is to prove part (iii) of \Cref{main:enumeration} (see \Cref{thm:iii} below). We begin with two preliminary lemmas.
\begin{lemma}\label{lem:gen}
Let $n$ be even and let $a\: S^{2n+3} \to S^{2n+2}$ denote an unstable representative for $\eta \in \pi_1\sphere$ (the first nonzero two-torsion class in the stable homotopy of spheres). Let $\sigma$ generate $\pi_{2n+2}BU(n) \simeq \Z/2$. The class $\sigma \circ a$ generates nonzero two-torsion in $ \pi_{2n+3}BU(n)$.
\end{lemma}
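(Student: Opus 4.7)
The plan is to relate $\sigma$ and $\sigma \circ a$ to classes in the homotopy of a sphere via the fiber sequence
\[
S^{2n+1} \xrightarrow{i} BU(n) \to BU(n+1),
\]
arising from the identification $U(n+1)/U(n) \cong S^{2n+1}$. The classical input I want to transport to $BU(n)$ is that $\eta^2$ is the nonzero element of $\pi_{2n+3}S^{2n+1}\simeq \Z/2$.

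First I would pin down $\sigma$ as $i_\ast(\eta_{2n+1})$, where $\eta_{2n+1} \in \pi_{2n+2}S^{2n+1}\simeq \Z/2$ denotes the iterated suspension of the Hopf class. The relevant portion of the long exact sequence of the fibration reads
\[
\pi_{2n+2}S^{2n+1} \xrightarrow{i_\ast} \pi_{2n+2}BU(n) \to \pi_{2n+2}BU(n+1),
\]
i.e.\ $\Z/2 \to \Z/2 \to \Z$ by the table at the start of \cref{Post} (the last group is $\Z$ because $2n+2$ is even and $\leq 2(n+1)$). Since any homomorphism $\Z/2 \to \Z$ is zero, $i_\ast$ is surjective, hence an isomorphism by cardinality. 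Associativity of composition then yields
\[
\sigma \circ a \;=\; i_\ast(\eta_{2n+1}) \circ a \;=\; i_\ast(\eta_{2n+1} \circ a),
\]
and $\eta_{2n+1} \circ a$ is a suspension of $\eta^2 \in \pi_5 S^2$, i.e.\ the nonzero element of $\pi_{2n+3}S^{2n+1}$.

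It therefore remains to verify that $i_\ast \:\pi_{2n+3}S^{2n+1} \to \pi_{2n+3}BU(n)$ does not annihilate $\eta^2$; this is where I expect the main obstacle. Continuing the long exact sequence,
\[
\pi_{2n+4}BU(n+1) \xrightarrow{\partial} \pi_{2n+3}S^{2n+1} \xrightarrow{i_\ast} \pi_{2n+3}BU(n),
\]
the issue reduces to excluding $\eta^2 \in \op{im}(\partial)$. I would handle this either by invoking Mimura's classical computation of $\pi_\ast BU(n)$ cited at the start of \cref{Post}, or, more self-contained, by a cohomological detection leveraging the class $U$ of \cref{def:UG} and the $\Sq^2$-calculations underlying \cref{prop:coaction}: pulling $U$ back along an extension of $\sigma \circ a$ gives an $\F_2$-valued invariant that is nonzero precisely when $i_\ast(\eta^2)\neq 0$. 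Once this nonvanishing is established, the stated isomorphism $\pi_{2n+3}BU(n) \simeq \Z/2$ upgrades "nonzero" to "generator", completing the argument.
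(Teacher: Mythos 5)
Your route is genuinely different from the paper's and, once the one step you flag as open is closed, it works. The paper's own proof identifies $[S^{2n+2},BU(n)]$ and $[S^{2n+3},BU(n)]$ with stable homotopy groups of the stunted projective spectrum $\Sigma\CP^{n+2}_n$ via Weiss calculus and Toda's theorem, matches $\sigma$ with $\eta$ on the bottom cell by an Adams spectral sequence computation, and concludes because $\eta^2$ on the bottom cell is nonzero there. You instead use the classical fibration $S^{2n+1}\xrightarrow{i} BU(n)\to BU(n+1)$, which is more elementary and essentially self-contained; your identification $\sigma=i_*(\eta_{2n+1})$ is correct as argued. The step you defer --- excluding $\eta^2\in\op{im}(\partial)$ for $\partial\colon\pi_{2n+4}BU(n+1)\to\pi_{2n+3}S^{2n+1}$ --- is in fact immediate from the table of homotopy groups at the start of \cref{Post}: substituting $n+1$ for $n$ there, $\pi_{2(n+1)+2}BU(n+1)=0$ because $n+1$ is odd, so $\partial=0$ and $i_*$ is injective on $\pi_{2n+3}S^{2n+1}\cong\Z/2$. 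Thus your first fallback (citing the known groups) closes the argument in one line. I would not rely on your second fallback: the class $U$ of \cref{def:UG} lives on $F$, pulls back to zero on $BU(n)$ by construction (\cref{diag:g}), and $f\colon BU(n)\to F$ is only an equivalence after $(2n+1)$-truncation, so $U$ cannot detect elements of $\pi_{2n+3}BU(n)$. One caveat on your closing sentence: the same long exact sequence shows $\pi_{2n+3}BU(n)$ surjects onto $\pi_{2n+3}BU(n+1)\cong\Z/(n+1)!$ (since $\partial$ vanishes on $\pi_{2n+3}BU(n+1)$ as well), so the displayed isomorphism $\pi_{2n+3}BU(n)\simeq\Z/2$ should not be leaned on to upgrade ``nonzero'' to ``generator''; fortunately, all that is used downstream in \cref{thm:iii} is the nonvanishing of $\sigma\circ a$, which your argument establishes.
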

\begin{proof}
By \cite[Theorem 4.3]{Toda59}, for $n$ even there is an isomorphism 
\[[S^{2n+2},BU(n)]\simeq \pi_{2n+2}( \Sigma^{\infty+1} \CP^{n+2}_n),\] 
where $\CP^{n+2}_n$ is a stunted projective space (see \Cref{sec:not}). In the framework of Weiss calculus (see \cite[Section 2]{Hu}), this correspondence is given in two steps. First, there are  isomorphisms $[S^{k}, T_1BU(n)]\simeq [S^k,BU(n)]$ for $k<4n$, where $T_1BU(n)$ is the first stage in the Weiss tower. This identification is compatible with precomposition by maps between spheres in this range. Second, there is a diagram
\[
\begin{tikzcd}
L_1BU(n) \ar[r] & T_1BU(n)\\
S^k \ar[ur] \ar[u,dashed]
\end{tikzcd}
\]
where $L_1BU(n)$ is the first Weiss layer and where the dashed arrow exists and is unique for $k$ even, $k<4n$.
Lastly, there is a natural identification $\pi_{k}(L_1(BU(n))\cong \pi_{k}(\Sigma^{\infty+1} \CP^{n+2}_n)$ for $k \leq 2n+3$. 

An Adams spectral sequence for the two-localization of $\pi_{2n+2}(\Sigma^{\infty+1} \CP^{n+2}_n)$ shows that the generator  $\sigma\:S^{2n+2} \to BU(n)$ corresponds to $\eta$ on the bottom cell in $\Sigma^{\infty+1} \CP^{n+2}_n$ since $n$ is even. If $ \sigma \circ a$ were zero in $\pi_{2n+3}BU(n)$, then $\eta^2$ on the bottom cell of $\CP^{n+2}_n$ would be zero. But, $\eta^2$ on the bottom cell generates $\pi_{2n+3}(\Sigma^{\infty+1} \CP^{n+2}_n).$ \end{proof}
We also note the following elementary fact:
\begin{lemma}\label{lem:2torsion} Let $n\geq 4$ be even and let $V\: \CP^{n+1} \to BU(n)$ be a rank $n$ vector bundle on $\CP^{n+1}$. Then the class $V \circ h$ lies in a $2$-torsion subgroup of $\pi_{2n+3}BU(n)$.
\end{lemma}
\begin{proof} Note that $\pi_{2n+3}BU(n) \cong \Z/(n+1)!\oplus \Z/2$ by \cite[pp. 970-971]{Mimura_HBAT}. Consider the fiber sequence $S^{2n+1} \to BU(n) \to BU(n+1)$. The relevant portion of the long exact sequence on homotopy is:

\[  \pi_{2n+3} BU(n) \to \pi_{2n+3}BU(n+1) \to \pi_{2n+2} S^{2n+1} \to \pi_{2n+2}BU(n) \to \pi_{2n+2}BU(n+1). \]
Again referring to \cite[pp. 970-971]{Mimura_HBAT}, we find that this exact sequence takes the form:
\[ \Z/(n+1)! \oplus \Z/2 \to \Z/(n+1)! \to \Z/2 \to \Z/2 \to 0,\]
which forces the kernel of the map $\pi_{2n+3} BU(n) \to \pi_{2n+3}BU(n+1) $ to be $\Z/2$.

Now, note that the composite 
\begin{equation}\label{extension1} \CP^{n+1} \xrightarrow{V} BU(n) \to BU(n+1)\end{equation}
is the obstruction to extending a stable bundle on $\CP^{n+1}$ over $\CP^{n+2}$. Since the composite 
\[ S^{2n+3} \xrightarrow{h} \CP^{n+1} \xrightarrow{V}BU(n) \to BU(n+1) \to BU\] is null, where $h$ is the Hopf map, we note that $V$ extends to a stable bundle on $\CP^{n+2}$. By \Cref{main:thm1}, every stable rank bundle on $\CP^{n+2}$ with vanishing $c_{n+2}$ admits a rank $n+1$ representative, so the composite \Cref{extension1} is null, and the map $V \circ h$ takes values in the kernel that we have already identified with $\Z/2$.
\end{proof} 

\begin{thm}\label{thm:iii} Suppose $n\geq 2$ is even, integers $(a_1,\ldots, a_n)$ satisfy the Schwarzenberger condition $S_{n+1}$, and $a_1$ is even. Exactly one of the two rank $n$ bundles on $\CP^{n+1}$ with $i$-th Chern class equal to $a_i$ extends to a rank $n$ bundle on $\CP^{n+2}$.\end{thm}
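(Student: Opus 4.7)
The plan is to use Proposition~\ref{prop:action} combined with the cofiber sequence $S^{2n+3} \xrightarrow{h} \CP^{n+1} \to \CP^{n+2}$, where $h$ is the attaching map of the top cell of $\CP^{n+2}$. A rank $n$ bundle $V \colon \CP^{n+1} \to BU(n)$ extends to $\CP^{n+2}$ if and only if $V \circ h \simeq 0$ in $\pi_{2n+3}BU(n) \cong \Z/2$, so it suffices to show that the obstructions $V_1 \circ h$ and $V_2 \circ h$ associated to the two bundles differ by the nontrivial element of $\Z/2$.

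By Proposition~\ref{prop:action} I may write $V_2 = (V_1 \vee \sigma) \circ s$, where $\sigma$ generates $\pi_{2n+2}BU(n) \cong \Z/2$ and $s \colon \CP^{n+1} \to \CP^{n+1} \vee S^{2n+2}$ is the top-cell pinch. First I would analyze $s \circ h$ inside $\pi_{2n+3}(\CP^{n+1} \vee S^{2n+2})$. The cofiber of the inclusion $\CP^{n+1} \vee S^{2n+2} \hookrightarrow \CP^{n+1} \times S^{2n+2}$ has no cells below dimension $2n+4$, so this inclusion is a $\pi_{2n+3}$-isomorphism, and the two wedge-factor projections split $\pi_{2n+3}(\CP^{n+1} \vee S^{2n+2})$ as $\pi_{2n+3}\CP^{n+1} \oplus \pi_{2n+3}S^{2n+2}$. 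The composite of $s$ with the first projection is homotopic to the identity on $\CP^{n+1}$ (it collapses a contractible disk inside the top cell), while the composite with the second projection is homotopic to the top-cell collapse $p \colon \CP^{n+1} \to S^{2n+2}$. Hence $s \circ h$ corresponds to $(h,\, p \circ h) = (h,\, a)$ under the splitting, using Lemma~\ref{lem:hopf} for the second component.

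Post-composing with $V_1 \vee \sigma$ and using the abelian group structure on $\pi_{2n+3}BU(n)$ then yields
\[
V_2 \circ h \;=\; V_1 \circ h \;+\; \sigma \circ a
\]
in $\pi_{2n+3}BU(n) \cong \Z/2$. By Lemma~\ref{lem:gen}, $\sigma \circ a$ is the nontrivial element, so exactly one of $V_1 \circ h$ and $V_2 \circ h$ vanishes; the corresponding bundle is the one that extends to $\CP^{n+2}$.

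The main subtlety lies in the geometric analysis of step two: verifying both the splitting of $\pi_{2n+3}(\CP^{n+1} \vee S^{2n+2})$ in this range, and the identification of the second projection of $s$ with the top-cell collapse $p$. The first follows from the connectivity estimate placing us just below the first Whitehead-product correction, and the second is a direct consequence of unwinding the pinch-map construction in Proposition~\ref{prop:action}. Once these points are settled, the remainder is a routine obstruction-theoretic manipulation in the group $\pi_{2n+3}BU(n) \cong \Z/2$.
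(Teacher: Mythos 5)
Your overall strategy coincides with the paper's: reduce extendability to the obstruction $V\circ h\in\pi_{2n+3}BU(n)$, use \Cref{prop:action} to write $V_2=(V_1\vee\sigma)\circ s$, and show the two obstructions differ by $\sigma\circ a$, which generates by \Cref{lem:gen}. The gap is in your identification of $s\circ h$. The inclusion $\CP^{n+1}\vee S^{2n+2}\hookrightarrow\CP^{n+1}\times S^{2n+2}$ is \emph{not} a $\pi_{2n+3}$-isomorphism: the smash $\CP^{n+1}\wedge S^{2n+2}$ has its bottom cell in dimension $2n+4$, so the pair is only $(2n+3)$-connected, which gives surjectivity on $\pi_{2n+3}$ but injectivity only strictly below that degree. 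Equivalently, by the Ganea splitting $\Omega(A\vee B)\simeq\Omega A\times\Omega B\times\Omega\Sigma(\Omega A\wedge\Omega B)$, the group $\pi_{2n+3}(\CP^{n+1}\vee S^{2n+2})$ carries an extra $\Z$ summand invisible to the product, generated by the Whitehead product $[i_1\circ g,\,i_2]$ with $g\:S^2\to\CP^{n+1}$ the bottom cell (degree $2+(2n+2)-1=2n+3$). So $2n+3$ is exactly the degree of the first Whitehead-product correction, not ``just below'' it, and knowing the two projections of $s\circ h$ determines it only modulo a multiple of this Whitehead product. This is not a phantom worry: for the analogous pinch on $S^2$ and the classical Hopf map one has $s\circ h=i_1h+i_2h+[i_1,i_2]$, the Whitehead term appearing precisely because $h$ has Hopf invariant one.

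The gap is reparable in two ways. The paper's route is to pin down $s\circ h$ exactly: since $h$ is the attaching map of the top cell of $\CP^{n+2}$, the composite $s\circ h$ factors through the equatorial pinch $s'\:S^{2n+3}\to S^{2n+3}\vee S^{2n+3}$ as $(h\vee(p\circ h))\circ s'=i_1\circ h+i_2\circ(p\circ h)$, which excludes the Whitehead component before it can arise; \Cref{lem:hopf} then identifies $p\circ h$ with $a$. Alternatively, you can keep your splitting argument and observe that the ambiguity dies after pushing into $BU(n)$: the error term is $m\,[i_1\circ g,\,i_2]$ for some $m\in\Z$, and $(V_1\vee\sigma)_*[i_1\circ g,\,i_2]=[V_1\circ g,\,\sigma]=a_1[\gamma,\sigma]$ with $\gamma$ generating $\pi_2BU(n)$; since $a_1$ is even by hypothesis and $\pi_{2n+3}BU(n)\cong\Z/2$, this vanishes. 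Either repair recovers $V_2\circ h=V_1\circ h+\sigma\circ a$, and the remainder of your argument, including the appeals to \Cref{lem:hopf} and \Cref{lem:gen}, matches the paper's.
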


\begin{proof}Note that in the case $n=2$, this is proved in \cite[Section 7]{AR}. 

For arbitrary even $n$, we use the set-up from \Cref{lem:gen} and \Cref{prop:action}. The maps that will be relevant are:
\begin{itemize}
\item $h\: S^{2n+3} \to \CP^{n+1}$, the Hopf map; 
\item  $p\: \CP^{n+1} \to S^{2n+2}$ that is the cofiber of the inclusion of the $n$-skeleton $\CP^{n} \to \CP^{n+1}$; and
\item  $\sigma\: S^{2n+2} \to BU(n)$ that generates $\pi_{2n+2}BU(n)$.
\end{itemize}
We will also consider the action of $\pi_{2n+2}BU(n)$ on $[\CP^{n+1}, BU(n)]$, sending a pair $(\tau, V) \in \pi_{2n+2}(BU(n)) \times [\CP^{n+1},BU(n)]$ to the composite
$$\CP^{n+1} \xrightarrow{s} \CP^{n+1} \vee S^{2n+2} \xrightarrow{V \vee \tau} BU(n).$$

Given $V \: \CP^{n+1} \to BU(n)$, consider the composite $V \circ h \in \pi_{2n+3}BU(n)$. The Hopf map is the attaching map for the $(2n+4)$-cell in $\CP^{n+1}$, so an extension of $V$ to $\CP^{n+2}$ exists if and only if $V \circ h$ represents the trivial homotopy class. We claim that $V \circ h-(\sigma V) h$ is nontrivial $2$-torsion in $\pi_{2n+3}BU(n)$, where $\sigma$ generates $\pi_{2n+2}BU(n)$. For $a_1$ even, the two non-isomorphic rank $n$ vector bundles on $\CP^{n+1}$ with Chern classes $a_1,\ldots, a_n$ are precisely $V$ and $\sigma V$, and the obstructions $V$ and $\sigma V$ both take values in a nonzero two-torsion group by \Cref{lem:2torsion}, so the claim proves the theorem.

For the claim, we follow \cite[Proof of Lemma 2]{switzer79} and find:
\[(\sigma V) \circ h- V\circ h=(n+1)(\sigma \circ a) +[V \circ i_2,\sigma]\] where $i_2\: S^{2} \to \CP^{n+1}$ generates and the last term is a Whitehead product. If $c_1(V)$ is even, then $V \circ i_2=2m$ for some $m\: S^2 \to BU(n)$. Note that we use that $n$ is even to conclude that $a\cong p \circ h.$ By properties of whitehead products, $[V\circ i_2, \sigma] = 2 [m ,\sigma]=[m,2\sigma]$. Since $\sigma$ is $2$-torsion, it follows $[V\circ i_2,\sigma]=0$. In particular, 
\[ (\sigma V) \circ h - V \circ h=(n+1) (\sigma \circ a) = (\sigma \circ a),\]
which is a nonzero two-torsion class in $\pi_{2n+3}BU(n)$ by \Cref{lem:gen}.

\end{proof}

\appendix

\section{The Schwarzenberger conditions}\label{app:Schwarzenberger}

We follow \cite[Theorem 1]{Switzer} in this section. Given integers $c_1,\ldots, c_n\in \mathbb Z$, there exist complex numbers $\delta_1,\ldots, \delta_n \in \mathbb C$ such that 
$y^n+c_1y^{n-1}+\cdots + c_{n-1}y + c_n = \prod_{j=1}^n(y+\delta_j).$
 The Schwarzenberger condition is the requirement
\begin{equation}\label{schwarz} 
S_n\, \, : \forall r \in \Z \text{ such that }2 \leq r \leq n, \,\, \sum_{j=1}^n{\delta_j \choose r}  \in \mathbb Z \,.
\end{equation}
\begin{thm}[{\cite[Theorem A]{Thomas}, \cite[Theorem 1]{Switzer}}]\label{lem:Thomas_thmA}\label{thm:Schwarz} Integers $c_1,\ldots , c_k \in \Z$ are the Chern classes of a rank $k$ vector bundle on $\CP^k$ if and only if $c_1,\ldots, c_k$ satisfy $S_k$.
\end{thm}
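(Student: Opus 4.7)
The plan is to translate both directions of the theorem into $K$-theory via the Chern character, reducing to an integrality check on a rational $K$-theory class. Throughout I would use the identification $K^0(\CP^n)\cong \Z[h]/(h^{n+1})$, where $h := [L]-1$ for $L$ the tautological line bundle, together with the fact that the rational Chern character
\[
\op{ch}\colon K^0(\CP^n)\otimes \Q \xrightarrow{\cong} H^{\mathrm{ev}}(\CP^n;\Q)
\]
is an isomorphism sending $L$ to $e^t$ (equivalently $1+h \mapsto e^t$), where $t$ generates $H^2(\CP^n;\Z)$. Defining $(1+h)^\delta := \sum_{r=0}^n \binom{\delta}{r} h^r$ in $\Q[h]/(h^{n+1})$ for arbitrary $\delta \in \C$, a formal power-series computation gives $\op{ch}((1+h)^\delta) = e^{\delta t}$.

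First I would show that $S_n$ is equivalent to the existence of an element of $K^0(\CP^n)$ of virtual rank $n$ with total Chern class $1 + c_1 t + \cdots + c_n t^n$. Factor $y^n + c_1 y^{n-1} + \cdots + c_n = \prod_{j=1}^n (y + \delta_j)$. If $V$ is a rank $n$ bundle with $c_i(V)=c_i$, then by the splitting principle $\op{ch}(V) = \sum_j e^{\delta_j t}$, so the $K$-theory class of $V$ in the rationalization equals
\[
[V] \;=\; \sum_{j=1}^n (1+h)^{\delta_j} \;=\; \sum_{r=0}^n \Bigl(\sum_{j=1}^n \tbinom{\delta_j}{r}\Bigr) h^r.
\]
The coefficient of $h^r$ is symmetric in the $\delta_j$, hence a polynomial in the $c_i$ with rational coefficients. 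The coefficients for $r=0$ and $r=1$ are $n$ and $-c_1$, automatically integral. Integrality of the coefficient of $h^r$ for $2 \le r \le n$ is exactly the condition $\sum_j \binom{\delta_j}{r} \in \Z$, i.e., $S_n$.

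Finally, I would pass from virtual classes to honest rank $n$ bundles. The direction ``bundle exists $\Rightarrow S_n$'' is immediate from the preceding discussion, since an actual bundle yields an honest integral $K$-theory class. For the converse, the virtual rank $n$ class constructed above lifts to a genuine rank $n$ bundle by obstruction theory for the map $BU(n)\to BU$: arguing as in \Cref{thm:suff}, the obstructions to reducing a stable bundle on $\CP^n$ to rank $n$ vanish because $H^{>2n}(\CP^n;\Z)=0$ kills the relevant cohomology groups $H^{2j+2}(\CP^n;\pi_{2j+1}(U/U(n)))$ for $j\ge n$. The main obstacle I expect is the formal identity $\op{ch}((1+h)^\delta) = e^{\delta t}$ for non-integer (and possibly complex) $\delta$, together with the bookkeeping that each $\sum_j \binom{\delta_j}{r}$ is a rational polynomial in $c_1,\ldots,c_n$. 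Both reduce to symmetric-function manipulations in $\Q[\delta_1,\ldots,\delta_n]^{S_n}$, but tracking denominators requires care.
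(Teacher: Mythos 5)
The paper does not actually prove this statement: it is imported verbatim from Thomas and Switzer, so there is no internal proof to compare against. Your proposal is, in essence, a correct reconstruction of the classical argument from those references: identify $K^0(\CP^k)\cong \Z[h]/(h^{k+1})$, use the rational Chern character to write the candidate class as $\sum_j (1+h)^{\delta_j}$ via the formal identity $\op{ch}\bigl((1+h)^{\delta}\bigr)=e^{\delta t}$, observe that integrality of the $h^r$-coefficients $\sum_j\binom{\delta_j}{r}$ (which are rational, being symmetric in the $\delta_j$) is precisely $S_k$, and then pass between virtual rank $k$ classes and honest rank $k$ bundles using that $BU(k)\to BU$ is $(2k+1)$-connected while $\CP^k$ is $2k$-skeletal, so $[\CP^k,BU(k)]\to[\CP^k,BU]$ is a bijection. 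All the steps you flag as requiring care do go through: the identity $\op{ch}((1+h)^{\delta})=e^{\delta t}$ holds in $\Q[t]/(t^{k+1})$ because both sides are polynomial in $\delta$ in each $t$-degree and agree for all non-negative integers $\delta$, and the recovery of the Chern classes of the integral class $\sum_j(1+h)^{\delta_j}$ from its Chern character is legitimate since $H^*(\CP^k;\Z)$ is torsion-free. The only blemish is the sign in your $r=1$ coefficient: with the paper's convention $\prod_j(y+\delta_j)=y^k+c_1y^{k-1}+\cdots$ one has $\sum_j\delta_j=c_1$, not $-c_1$; this is immaterial since that coefficient is integral either way.
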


\begin{cor}\label{cor:Schwarz} Integers $c_1,\ldots, c_n\in \Z$ are the Chern classes of a rank $n$ complex topological vector bundle on $\CP^{n+1}$ if and only if $c_1,\ldots, c_n, 0$ satisfy $S_{n+1}$.
\end{cor}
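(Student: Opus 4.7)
The plan is to obtain the corollary as a direct consequence of \Cref{thm:Schwarz} (applied at rank $n+1$ on $\CP^{n+1}$) combined with \Cref{thm:suff}, which lets us pass between rank $n$ bundles and their stabilizations by a trivial line bundle. The two ingredients fit together because a rank $n$ bundle on $\CP^{n+1}$ and its sum with a trivial line bundle represent the same stable class and carry the same first $n$ Chern classes, with an automatic vanishing $(n+1)$-st Chern class.

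For the forward implication, suppose $V$ is a rank $n$ complex topological bundle on $\CP^{n+1}$ with $c_i(V)=c_i$ for $1\le i\le n$. Form $V\oplus \underline{\mathbb C}$, a rank $n+1$ bundle on $\CP^{n+1}$. The Whitney sum formula gives Chern classes $c_1,\ldots,c_n,0$, and \Cref{thm:Schwarz} applied to $k=n+1$ forces these to satisfy $S_{n+1}$.

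For the converse, assume $c_1,\ldots,c_n,0$ satisfy $S_{n+1}$. By \Cref{thm:Schwarz}, there is a rank $n+1$ bundle $W$ on $\CP^{n+1}$ with Chern classes exactly $c_1,\ldots,c_n,0$. In particular $c_{n+1}(W)=0$, so \Cref{thm:suff} produces a rank $n$ bundle $V$ on $\CP^{n+1}$ stably equivalent to $W$. Since Chern classes are stable invariants, $c_i(V)=c_i$ for $1\le i\le n$, completing the proof.

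There is no real obstacle here; the whole content has been set up in advance, and the argument is just an assembly. The only mild point worth being explicit about is that \Cref{thm:suff} is being used in its ``if'' direction, which lets us destabilize a rank $n+1$ bundle with vanishing top Chern class to a genuine rank $n$ bundle, and that Chern classes in degrees $\le n$ are preserved under this destabilization because stable equivalence preserves all Chern classes.
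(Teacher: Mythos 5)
Your proposal is correct and follows the same route as the paper: apply \Cref{thm:Schwarz} at rank $n+1$ on $\CP^{n+1}$, then use \Cref{thm:suff} to pass between rank $n$ bundles and rank $n+1$ bundles with vanishing top Chern class. Your write-up is in fact slightly more careful than the paper's, spelling out both directions (Whitney sum for stabilization, stability of Chern classes for destabilization) explicitly.
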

\begin{proof} Let $c_{n+1}=0$. By \Cref{thm:Schwarz}, $(c_1,\ldots, c_{n+1})$ are the Chern classes of a rank $n+1$ bundle on $\CP^{n+1}$ if and only if $S_{n+1}$ is satisfied. By \Cref{thm:suff}, a rank $n+1$ bundle on $\CP^{n+1}$ with Chern classes $(c_1,\ldots, c_{n+1})$ has a trivial rank $1$ summand if and  $c_{n+1}=0.$ 
\end{proof}
The Schwarzenberger conditions are challenging to write explicitly in generality, but some formulas for rank $3$ bundles on $\CP^5$ are given in \cite[Section 2.4]{Opie-r3p5}. The computations there can be easily adapted to the case of rank $4$ bundles on $\CP^5$, or to rank $3$ bundles on $\CP^3$ or $\CP^4$.


\bibliographystyle{abbrv}
\bibliography{arxiv2}

\end{document}